\documentclass[10pt,reqno]{amsart}

\usepackage{amssymb,latexsym,mathrsfs,amsmath}%,tikz,tikz-cd}
\usepackage{amsthm}
\usepackage{graphicx}
\usepackage{tikz}
\usetikzlibrary{arrows}
\usepackage{caption}
\usepackage{subcaption}
\usepackage{etex}

%\usepackage{auto-pst-pdf}
%\usepackage[all,2cell]{xy}
%\SelectTips{eu}{12}

\newcommand{\Kh}{H_{\mathrm{Kh}}}
\newcommand{\BN}{H_{\mathrm{BN}}}

\newcommand{\Z}{\mathbb{Z}}

\newcommand{\cC}{\mathscr{C}}

\newcommand{\X}{\mathcal{X}}

\newcommand{\Q}{\mathbb{Q}}

\newcommand{\K}{\mathbf{k}}
\newcommand{\F}{\mathbb{F}}
\newcommand{\tildcob}{\widetilde{\mathcal{C}ob}^3_\bullet}
\newcommand{\barcob}[1]{\overline{\mathcal{C}ob}^3_{#1}}

\newcommand{\textlbrackdbl}{[\![}
\newcommand{\textrbrackdbl}{]\!]}

\DeclareMathOperator{\Sq}{Sq}

\DeclareMathOperator{\id}{id}
\DeclareMathOperator{\im}{im}

\newtheorem{lemma}{Lemma}[section]

\newtheorem{proposition}[lemma]{Proposition}

\theoremstyle{definition}
\newtheorem{remark}[lemma]{Remark}
\newtheorem{definition}[lemma]{Definition}

\newcommand{\sphere}[3]{
\shade[ball color = gray!40, opacity = 0.3] ({#1},{#2}) circle ({#3});
\draw (#1,#2) circle ({#3});
\draw (#1-#3,#2) arc (180:360:#3 and 0.3*#3);
\draw[dashed] (#1+#3,#2) arc (0:180:#3 and 0.3*#3);
}

\newcommand{\spheredot}[3]{
\sphere{#1}{#2}{#3}
\node at (#1-0.3*#3,#2+0.45*#3) [scale = 1.5*#3] {$\bullet$};
}

\newcommand{\spheredots}[3]{
\sphere{#1}{#2}{#3}
\node at (#1-0.3*#3,#2+0.45*#3) [scale = 1.5*#3] {$\bullet$};
\node at (#1+0.3*#3,#2+0.45*#3) [scale = 1.5*#3] {$\bullet$};
}

\newcommand{\plane}[2]{
\shade[color = gray!40, opacity = 0.3] (#1,#2-0.5) -- (#1+1,#2) -- (#1+1,#2+1.5) -- (#1,#2+1) -- (#1,#2-0.5);
\draw (#1,#2-0.5) -- (#1+1,#2) -- (#1+1,#2+1.5) -- (#1,#2+1) -- (#1,#2-0.5);
}

\newcommand{\planehole}[2]{
\shade[color = gray!40, opacity = 0.3] (#1,#2-0.5) -- (#1+1,#2) -- (#1+1,#2+0.2) arc (270:90:0.3 and 0.3) -- (#1+1,#2+1.5) -- (#1,#2+1) -- (#1,#2-0.5);
\draw (#1,#2-0.5) -- (#1+1,#2) -- (#1+1,#2+0.2) arc (270:90:0.3 and 0.3) -- (#1+1,#2+1.5) -- (#1,#2+1) -- (#1,#2-0.5);
}

\newcommand{\cylinder}[4]{
\shade[ball color = gray!40, opacity = 0.3] (#1,#2) arc (180:360:0.4*#3 and 0.2*#3) -- (#1+0.8*#3,#2+#4*#3) 
arc (0:180:0.4*#3 and -0.2*#3);
\shade[ball color = gray!40, opacity = 0.15] (#1,#2+#4*#3) arc (180:360:0.4*#3 and -0.2*#3) arc (0:180:0.4*#3 and -0.2*#3);
\draw (#1,#2) arc (180:360:0.4*#3 and 0.2*#3) -- (#1+0.8*#3,#2+#4*#3) 
arc (0:180:0.4*#3 and 0.2*#3) -- (#1,#2);
\draw[dashed] (#1+0.8*#3,#2) arc (0:180:0.4*#3 and 0.2*#3);
\draw (#1,#2+#4*#3) arc (180:360:0.4*#3 and 0.2*#3);
}

\newcommand{\bowl}[3]{
\shade[ball color = gray!40, opacity = 0.3] (#1+0.8*#3,#2) arc (0:180:0.4*#3 and -0.2*#3) -- (#1,#2-0.1*#3) arc (180:360:0.4*#3 and 0.4*#3) -- (#1+0.8*#3,#2);
\shade[ball color = gray!40, opacity = 0.15] (#1,#2) arc (180:360:0.4*#3 and -0.2*#3) arc (0:180:0.4*#3 and -0.2*#3);
\draw (#1+0.8*#3,#2) arc (0:180:0.4*#3 and -0.2*#3) -- (#1,#2-0.1*#3) arc (180:360:0.4*#3 and 0.4*#3) -- (#1+0.8*#3,#2);
\draw (#1,#2) arc (180:360:0.4*#3 and -0.2*#3);
}

\newcommand{\bowldot}[3]{
\bowl{#1}{#2}{#3}
\node at (#1+0.3*#3,#2-0.3*#3) [scale = 0.7*#3] {$\bullet$};
}

\newcommand{\bowlud}[3]{
\shade[ball color = gray!40, opacity = 0.3] (#1,#2) arc (180:360:0.4*#3 and 0.2*#3) -- (#1+0.8*#3,#2+0.1*#3) arc (0:180:0.4*#3 and 0.4*#3) -- (#1,#2);
\draw (#1,#2) arc (180:360:0.4*#3 and 0.2*#3) -- (#1+0.8*#3,#2+0.1*#3) arc (0:180:0.4*#3 and 0.4*#3) -- (#1,#2);
\draw[dashed] (#1+0.8*#3,#2) arc (0:180:0.4*#3 and 0.2*#3);
}

\newcommand{\bowluddot}[3]{
\bowlud{#1}{#2}{#3}
\node at (#1+0.3*#3,#2+0.3*#3) [scale = 0.7*#3] {$\bullet$};
}

\newcommand{\pop}[3]{
\shade[ball color = gray!40, opacity = 0.3] (#1,#2) arc (180:360:0.4*#3 and 0.2*#3) to [out = 90, in = 270] (#1+1.6*#3,#2+1*#3) arc (0:180:0.4*#3 and -0.2*#3) arc (0:180:0.4*#3 and -0.4*#3) arc (0:180:0.4*#3 and -0.2*#3) to [out = 270, in = 90] (#1,#2);
\shade[ball color = gray!40, opacity = 0.15] (#1-0.8*#3,#2+#3) arc (180:360:0.4*#3 and -0.2*#3) arc (0:180:0.4*#3 and -0.2*#3);
\shade[ball color = gray!40, opacity = 0.15] (#1+0.8*#3,#2+#3) arc (180:360:0.4*#3 and -0.2*#3) arc (0:180:0.4*#3 and -0.2*#3);
\draw (#1,#2) arc (180:360:0.4*#3 and 0.2*#3) to [out = 90, in = 270] (#1+1.6*#3,#2+1*#3) arc (0:180:0.4*#3 and -0.2*#3) arc (0:180:0.4*#3 and -0.4*#3) arc (0:180:0.4*#3 and -0.2*#3) to [out = 270, in = 90] (#1,#2);
\draw[dashed] (#1,#2) arc (180:360:0.4*#3 and -0.2*#3);
\draw (#1,#2+#3) arc (0:180:0.4*#3 and 0.2*#3);
\draw (#1+1.6*#3,#2+#3) arc (180:360:-0.4*#3 and -0.2*#3);
}

\newcommand{\popud}[3]{
\shade[ball color = gray!40, opacity = 0.3] (#1,#2) arc (180:360:0.4*#3 and 0.2*#3) to [out = 270, in = 90] (#1+1.6*#3,#2-1*#3) arc (0:180:0.4*#3 and -0.2*#3) arc (0:180:0.4*#3 and 0.4*#3) arc (0:180:0.4*#3 and -0.2*#3) to [out = 90, in = 270] (#1,#2);
\shade[ball color = gray!40, opacity = 0.15] (#1,#2) arc (180:360:0.4*#3 and -0.2*#3) arc (0:180:0.4*#3 and -0.2*#3);
\draw (#1,#2) arc (180:360:0.4*#3 and 0.2*#3) to [out = 270, in = 90] (#1+1.6*#3,#2-1*#3) arc (0:180:0.4*#3 and -0.2*#3) arc (0:180:0.4*#3 and 0.4*#3) arc (0:180:0.4*#3 and -0.2*#3) to [out = 90, in = 270] (#1,#2);
\draw (#1,#2) arc (180:360:0.4*#3 and -0.2*#3);
\draw[dashed] (#1,#2-#3) arc (0:180:0.4*#3 and 0.2*#3);
\draw[dashed] (#1+1.6*#3,#2-#3) arc (180:360:-0.4*#3 and -0.2*#3);
}

\newcommand{\poscrossing}[3]{
\draw (#1,#2) -- (#1+#3,#2+#3);
\draw (#1+#3,#2) -- (#1+0.6*#3,#2+0.4*#3);
\draw (#1,#2+#3) -- (#1+0.4*#3,#2+0.6*#3);
}

\newcommand{\negcrossing}[3]{
\draw (#1,#2) -- (#1+0.4*#3,#2+0.4*#3);
\draw (#1+0.6*#3,#2+0.6*#3) -- (#1+#3,#2+#3);
\draw (#1+#3,#2) -- (#1, #2+#3);
}

\newcommand{\smoothingud}[3]{
\draw (#1,#2) to [out = 45, in = 315] (#1,#2+#3);
\draw (#1+#3,#2) to [out = 135, in = 225] (#1+#3,#2+#3);
}

\newcommand{\smoothinglr}[3]{
\draw(#1,#2) to [out = 45, in = 135] (#1+#3,#2);
\draw(#1,#2+#3) to [out = 315, in = 225] (#1+#3,#2+#3);
}

\newcommand{\circlenumber}[1]{
\hspace{-6pt}
\begin{tikzpicture}[baseline=-3pt]%{([yshift=-.75ex]current bounding box.center)}]
\draw (0,0) circle (0.15);
\node at (0,0) [scale = 0.6] {$#1$};
\end{tikzpicture}
\hspace{-6pt}
}

\newcommand{\squarenumber}[5]{
\draw (#1-#3,#2) -- (#1+0.6+#3,#2) -- (#1+0.6+#3,#2+0.6)-- (#1-#3,#2+0.6)-- (#1-#3,#2);
\node at (#1+0.3,#2+0.3) [scale = #5]{$#4$};
\draw (#1+0.1,#2) -- (#1+0.1,#2-0.1);
\draw (#1+0.5,#2) -- (#1+0.5,#2-0.1);
\draw (#1+0.1,#2+0.6) -- (#1+0.1,#2+0.7);
\draw (#1+0.5,#2+0.6) -- (#1+0.5,#2+0.7);
}

\begin{document}
\parindent0em
\setlength\parskip{.1cm}
\thispagestyle{empty}
\title{A fast Algorithm for calculating $S$-Invariants}
\author[Dirk Sch\"utz]{Dirk Sch\"utz}
\address{Department of Mathematical Sciences\\ Durham University\\UK.}
\email{dirk.schuetz@durham.ac.uk}

\begin {abstract}
We use the divide-and-conquer and scanning algorithms for calculating Khovanov cohomology directly on the Lee- or Bar-Natan deformations of the Khovanov complex to give an alternative way to compute Rasmussen $s$-invariants of knots. By disregarding generators away from homological degree $0$ we can considerably improve the efficiency of the algorithm. With a slight modification we can also apply it to a refinement of Lipshitz-Sarkar.
\end {abstract}

\maketitle

\section{Introduction}
The celebrated Rasmussen $s$-invariant \cite{MR2729272} assigns an even integer $s(K)$ to any knot $K$ such that $|s(K)|\leq 2\cdot g_\ast(K)$, where $g_\ast(K)$ is the smooth slice genus of $K$. Its definition is based on the Lee complex \cite{MR2173845}, a deformation of the Khovanov complex \cite{MR1740682} whose differential does not preserve the quantum grading. However, it admits a filtration which is preserved by the differential.
The $s$-invariant is then derived from the filtration.

Rasmussen worked originally over $\Q$, but $s$-invariants have since been generalized to arbitrary fields \cite{MR2320159}, and recently to cohomology operations \cite{MR3189434}. Our algorithm has been designed to deal with these. Another class of generalizations have been considered in \cite{MR3692911}, it would be worthwhile to see if our techniques can be used in that setting.

The computation of $s$-invariants is time consuming, and is usually done by calculating the Khovanov cohomology first, compare \cite{MR2729272, MR2657647}, but is potentially rewarding as it may detect counterexamples to the smooth Poincar\'e conjecture%\footnote{It was claimed in \cite{MR3100886} that this is not the case, but the statement has recently been retracted \cite{2011arXiv1110.1297K}.}
, see \cite{MR2657647}. Whether this is a promising line of attack remains to be seen though.

In \cite{MR2320156} Bar-Natan developed a powerful algorithm to calculate Khovanov cohomology which also made the calculation of $s$-invariants possible for large classes of knots. Lee cohomology \cite{MR2173845} on the other hand has a rather simple form, with all the information for a knot contained in homological degree $0$. This was used in \cite{MR2776617}, where bounds for the $s$-invariant are given based on focussing directly on the Lee complex in homological degree $0$.

Our approach to calculate the $s$-invariant is to use the techniques of \cite{MR2320156} directly on the Lee complex (or rather on a slight variation of this complex) while also focussing on homological degree $0$. With only small adjustments we can use this to calculate the $\Sq^1$-refinement of \cite{MR3189434}. The algorithm we present in this paper has been turned into a computer programme, see \cite{SKnotJob}, and results of calculations are listed in Section \ref{sec:examples}.

Our calculations for the $\Sq^1$-refinement suggest that it detects whether the original Rasmussen invariant differs from the $s$-invariant over $\F_2$. In theory, this need not be the case, but it would be interesting to have a knot where this is actually not the case. For such a knot the $\Sq^1$-refinement would give a better lower bound on the slice genus than the $s$-invariants.

\subsection*{Acknowledgements}

The author would like to thank Andrew Lobb for many useful discussions.

\section{Lee--Bar-Natan cohomology}

Our description of $s$-invariants largely follows \cite[\S 2]{MR3189434}, as this will help with the transition to the $\Sq^1$-refinement.

Let $\F$ be a field and $V$ the vector space over $\F$ generated by two elements $x_+$ and $x_-$. The \em Bar-Natan Frobenius algebra \em is then given by using the multiplication $m\colon V \otimes V \to V$
\[
 m(x_+\otimes x_+) = x_+ \hspace{0.5cm}m(x_+\otimes x_-)=x_- \hspace{0.5cm}m(x_-\otimes x_+)=x_-\hspace{0.5cm} m(x_-\otimes x_-) = x_-,
\]
the comultiplication $\Delta\colon V \to V\otimes V$
\[
 \Delta(x_+) = x_+\otimes x_- + x_-+\otimes x_+- x_+\otimes x_+ \hspace{1cm}\Delta(x_-) = x_-\otimes x_-,
\]
with unit $\imath\colon \F \to V$ and counit $\eta\colon V \to \F$ given by
\[
 \imath(1)=x_+, \hspace{1cm}\eta(x_+)=0\hspace{0.8cm}\eta(x_-)=1.
\]
A link diagram $L$ gives rise to a bigraded cochain complex $C^{\ast,\ast}(L;\F)$ in the usual way, and we call this complex the Bar-Natan complex, see \cite[\S 9.3]{MR2174270}, and also \cite{MR3189434,MR2320159,MR2286127}. A slightly different complex was used by Lee in \cite{MR2173845} which does not work as well over fields of characteristic $2$.

The cohomology $\BN^\ast(L;\F)$ of this cochain complex has been calculated in \cite[Prop.2.3]{MR2320159}, and in particular for a knot $K$ there are exactly two generators, both in homological degree $0$.

The quantum degrees of the generators give rise to a filtration $\mathcal{F}_qC = \mathcal{F}_qC^{\ast,\ast}(L;\F)$ of subcomplexes with
\[
\cdots \subset \mathcal{F}_{q+2}C \subset \mathcal{F}_qC \subset \mathcal{F}_{q-2}C \subset \cdots \subset C^{\ast,\ast}(L;\F).
\]
The \em Rasmussen $s$-invariant \em of a knot $K$ is then defined as
\begin{align*}
s^\F(K) &= \max\{q\in 2\Z+1\,|\, i^\ast \colon H^\ast(\mathcal{F}_qC) \to \BN^\ast (K;\F) \cong \F^2 \mbox{ surjective}\}+1\\
&= \max\{q\in 2\Z-1\,|\, i^\ast \colon H^\ast(\mathcal{F}_qC) \to \BN^\ast (K;\F) \cong \F^2 \mbox{non-zero}\}-1.
\end{align*}
It is shown in \cite[Prop.2.6]{MR3189434} that the two lines give indeed the same number.

Rasmussen \cite{MR2729272} originally worked over $\Q$ and used the Lee complex \cite{MR2173845}, but it is shown in \cite{MR2320159} that we get the same result from the Bar-Natan complex. Seed, compare \cite{MR3189434}, provided examples of knots $K$ for which $s^{\F_2}(K) \not= s^{\Q}(K)$, where $\F_2$ is the field with two elements. The first example is $14n19265$.

The usual technique to calculate $s^\F(K)$ is to work out the Khovanov cohomology of $K$ and read it off the Poincar\'e polynomial as in \cite[Thm.5.1]{MR2657647}, hoping one does not run into ambiguities when applying this theorem. Working out the Khovanov cohomology can be very time-consuming, and currently the fastest known method is Bar-Natan's scanning algorithm \cite{MR2174270}.

We plan to use this algorithm on the Bar-Natan complex rather than the Khovanov complex. This does not seem like an improvement, as the Bar-Natan complex has the same generators as the Khovanov complex, and a more complicated coboundary. But since the cohomology of a knot is concentrated in degree $0$, we can disregard a lot of the information that arises during the construction of the complex. We will make this more precise in the next section. 

First we want to get a local version for the Bar-Natan complex, using the method of \cite{MR2174270}. Recall that a \em pre-additive category \em $\mathfrak{C}$ is a category where the morphism sets abelian groups such that composition is bilinear in the obvious sense.  If it contains a zero-object and a biproduct of any two objects, the category is called an \em additive category\em.

Any pre-additive category can be turned into an additive category by formally adding direct sums of objects (the biproducts), and treating composition and addition of morphisms as for matrices. Details are given in \cite[Def.3.2]{MR2174270}. We denote this additive closure of the pre-additive category $\mathfrak{C}$ as $\mathfrak{M(C)}$.
Furthermore, we can also consider the category $\mathfrak{K(C)}$ of cochain complexes over an additive category $\mathfrak{C}$, again, see \cite{MR2174270} for details.

In \cite{MR2174270} Bar-Natan defines a pre-additive category $\mathcal{C}ob^3(B)$, where $B\subset S^1$ is a finite set of points and the objects are compact $1$-dimensional manifolds $M\subset D^2$ with $\partial M=B$, and the morphisms are free abelian groups generated by cobordisms within the cylinder, up to boundary fixing isotopies. 
A slight variation is given in \cite[\S 11.2]{MR2174270}, denoted $\mathcal{C}ob^3_\bullet(B)$, which has the same objects, but cobordisms are allowed to have markings in the form of finitely many dots, which are allowed to move freely.

%We can treat these categories as graded categories by freely adding a $\Z$-action on the objects, and letting morphism sets ignore the grading. So if $S$ is an object, we denote by $S\{q\}$ the shifted object for $q\in \Z$.

Define a quotient category $\widetilde{\mathcal{C}ob}^3_\bullet(B)$ of $\mathcal{C}ob^3_\bullet(B)$ by adding relations to the morphism groups using the local relations
\begin{equation}\label{eq:dots}
\begin{tikzpicture}[baseline={([yshift=-.5ex]current bounding box.center)}]
\sphere{0}{0}{0.5}
\node at (1,0) {$=0$,};
\spheredot{3}{0}{0.5}
\node at (4,0) {$=1$,};
\plane{5}{-0.5}
\node at (5.5,0.2) [scale = 0.75] {$\bullet$};
\node at (5.5,-0.2) [scale = 0.75] {$\bullet$};
\node at (6.3,0) {$=$};
%\plane{8}{-0.5}
\planehole{6.6}{-0.5}
\node at (7.1,0) [scale = 0.75] {$\bullet$};
\spheredots{7.6}{0}{0.3}
\node at (8,-0.2) {,};
\end{tikzpicture}
\end{equation}
and
\begin{equation}\label{eq:cylrel}
\begin{tikzpicture}[baseline={([yshift=-.5ex]current bounding box.center)}]
\cylinder{0}{0}{1}{2}
\node at (1.5,1) {$+$};
\bowl{2.2}{2}{1}
\bowlud{2.2}{0}{1}
\spheredots{2.6}{1}{0.4}
\node at (3.7,1) {$=$};
\bowldot{4.4}{2}{1}
\bowlud{4.4}{0}{1}
\node at (5.9,1) {$+$};
\bowl{6.6}{2}{1}
\bowluddot{6.6}{0}{1}
\node at (8,0.8) {.};
\end{tikzpicture}
\end{equation}
Notice that setting
\[
\begin{tikzpicture}
\spheredots{0}{0}{0.3}
\node at (0.75,0) {$=0$};
\end{tikzpicture}
\]
recovers $\mathcal{C}ob^3_{\bullet/l}(B)$ from \cite[\S 11.2]{MR2174270}. Relation (\ref{eq:dots}) and (\ref{eq:cylrel}) ensure that any morphism from the emptyset to itself is a linear combination of products of spheres with two dots. In other words, we have $\mathrm{Mor}(\emptyset,\emptyset) \cong \Z[H]$, a polynomial ring in a variable $H$.

Similar relations were introduced in \cite{MR2253455} to give a local model for the Lee complex.

We can turn the categories into graded categories by considering objects together with an integer $q$ which denotes the grading. We thus write $S\{q\}$ for an object in the graded category, where $S$ is still a compact $1$-dimensional manifold embedded in the disc. Unlike in \cite[\S 6]{MR2174270} we will be more restrictive with the morphisms.
We require the basic morphisms birth and death to lower the grading by $1$, while surgeries increase the grading by $1$, and dotting raises the grading by $2$.

The Delooping Lemma of \cite{MR2320156} carries over, but with a different isomorphism.

\begin{lemma}\label{lem:deloop}
If an object $S$ of $\tildcob(B)$ contains a circle $C$, then $S$ is isomorphic to $S'\{1\}\oplus S'\{-1\}$ in $\mathfrak{M}(\tildcob(B))$, where $S'$ is the object obtained by removing the circle $C$ from $S$.
\end{lemma}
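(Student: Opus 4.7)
The plan is to follow the template of Bar-Natan's Khovanov delooping \cite[Lemma~3.1]{MR2320156}, modifying one of the four maps so as to accommodate the presence of a nonzero two-dotted sphere in $\tildcob(B)$. Write $H$ for the two-dotted closed sphere, regarded as an element of $\mathrm{Mor}(\emptyset,\emptyset)\cong\Z[H]$. I would define, in $\mathfrak{M}(\tildcob(B))$, four morphisms each of which is the identity on the part of $S$ away from $C$ and on $C$ is given as follows: $\psi_1\colon S'\{1\}\to S$ is the birth creating $C$; $\psi_{-1}\colon S'\{-1\}\to S$ is the dotted birth creating $C$; $\phi_{-1}\colon S\to S'\{-1\}$ is the death killing $C$; and $\phi_1\colon S\to S'\{1\}$ is the dotted death on $C$ minus $H$ times the undotted death on $C$. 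The last map is a formal combination in which ``$H$ times a cobordism'' means taking the disjoint union with an additional two-dotted sphere.

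First I would verify $\phi_j\circ\psi_i=\delta_{ij}\cdot\id_{S'\{i\}}$. Each such composition restricts to the identity on $S'$ times a closed sphere on $C$ (possibly accompanied by a disjoint two-dotted sphere). Using the sphere relations in (\ref{eq:dots}) together with the identification $H=$ two-dotted sphere, the four compositions evaluate to $1-H\cdot 0=1$, $H-H\cdot 1=0$, $0$, and $1$, respectively, yielding the required identity matrix.

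Next I would verify $\psi_1\phi_1+\psi_{-1}\phi_{-1}=\id_S$. Since both sides agree off $C$, this reduces to a cobordism equality on the $C$-component:
\[
 (\text{dotted cap},\,\text{cup})\,-\,H\cdot(\text{cap},\,\text{cup})\,+\,(\text{cap},\,\text{dotted cup})\;=\;\text{cylinder on }C,
\]
where a pair $(\alpha,\beta)$ denotes the two-component cobordism $C\to C$ built from $\alpha$ on the input side and $\beta$ on the output side. This is exactly the cylinder relation (\ref{eq:cylrel}) after rearranging, the disjoint two-dotted sphere appearing there contributing the factor of $H$.

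The main thing to get right is the modified definition of $\phi_1$: the naive choice of just a dotted cap fails to kill $\psi_{-1}$ because the two-dotted sphere is $H\neq 0$, and the correction $-H\cdot\mathrm{cap}$ is precisely what makes both identities close up against the relations (\ref{eq:dots}) and (\ref{eq:cylrel}). Once this adjustment is in place, the remaining verifications are direct; note also that the non-homogeneity of $\phi_1$ with respect to the quantum grading is no obstacle, since morphisms in $\tildcob(B)$ are free abelian combinations of cobordisms rather than graded-homogeneous.
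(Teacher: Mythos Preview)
Your proof is correct and takes essentially the same approach as the paper: your maps $\phi_1,\phi_{-1},\psi_1,\psi_{-1}$ are exactly the four cobordisms drawn in the paper's picture, with the crucial modification being the extra $-H\cdot(\text{undotted cap})$ term in $\phi_1$, and your verification via (\ref{eq:dots}) and (\ref{eq:cylrel}) is precisely the check the paper leaves implicit. One small remark: your closing comment about $\phi_1$ being non-homogeneous is not quite right---under the paper's conventions the two-dotted sphere has degree $+2$, so both summands of $\phi_1$ have degree $+1$ and the map is in fact homogeneous, as it must be for the graded isomorphism to make sense.
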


\begin{proof}
This is completely analogous to \cite[Pf.\ of Lm.3.1]{MR2320156}, although the first isomorphism $\varphi\colon S \to S'\{1\}\oplus S'\{-1\}$ is slightly different because of relation (\ref{eq:cylrel}). In pictures it can be described as
\[
\begin{tikzpicture}
\draw (0,0) circle (0.3);
\draw[->] (0.5,0) -- (2.5,0);
\node at (3.2,0.5) {$\emptyset \{1\}$};
\node at (3.2,0) {$\oplus$};
\node at (3.2,-0.5) {$\emptyset \{-1\}$};
\draw[->] (4,0) -- (6,0);
\draw (6.5,0) circle (0.3);
\bowldot{1}{1.5}{0.4}
\bowl{1.7}{1.5}{0.4}
\spheredots{1.86}{1.1}{0.16}
\node at (1.5,1.3) {$-$};
\bowl{1}{0.6}{0.4}
\draw (0.8,0.2) arc (270:90:0.15 and 0.75);
\draw (2.2,0.2) arc (270:90:-0.15 and 0.75);
\bowlud{4.5}{0.4}{0.4}
\bowluddot{5.2}{0.4}{0.4}
\node at (4.3,0.45) [scale = 1.2] {$($};
\node at (5.7,0.45) [scale = 1.2] {$)$};
\end{tikzpicture}
\]
where the morphism on the left arrow is a $2\times 1$ matrix, while the morphism on the right arrow is a $1\times 2$ matrix. Notice that we consider composition of cobordisms as going from top to bottom.
\end{proof}

As in \cite{MR2174270} any tangle diagram $T$ gives rise to an object $\textlbrackdbl T\textrbrackdbl$ in $\mathfrak{K}(\tildcob(\partial T))$. Furthermore, for any commutative ring $\K$ we have a functor $\mathcal{F}\colon \mathcal{C}ob^3_\bullet(\emptyset) \to \mathfrak{Mod}_{\K[H]}$ to the category of $\K[H]$-modules, given by $\mathcal{F}(S) = V^{\otimes k}$, where $k$ is the number of components of $S$ and $V=\K[x,H]/(x^2=xH)$, and the tensor product is over $\K[H]$.
As a $\K[H]$-module, $V$ is freely generated by $1$ and $x$. On morphisms we need to declare $\mathcal{F}$ on
\[
\begin{tikzpicture}
\bowl{0}{1}{1}
\node at (1,0) {,};
\bowlud{1.4}{0}{1}
\node at (2.4,0) {,};
\cylinder{2.8}{0}{1}{1}
\node at (3.2,0.3) [scale = 0.7] {$\bullet$};
\node at (3.8,0) {,};
\pop{5}{0}{1}
\popud{8.2}{1}{1}
\node at (7,0.1) {and};
\node at (10,0) {.};
\end{tikzpicture}
\]
On the generators $1$ and $x$ of $V$ we set
\[
\begin{tikzpicture}
\node at (0,0) {$\mathcal{F}($};
\bowl{0.3}{0.1}{0.4}
\node at (2.8,0) {$)\colon V \to \K[H],\, 1 \mapsto 0,\, x\mapsto 1$};
\node at (0,-0.5) {$\mathcal{F}($};
\bowlud{0.3}{-0.6}{0.4}
\node at (2.2,-0.5) {$)\colon \K[H] \to V ,\, 1 \mapsto 1$};
\node at (0,-1) {$\mathcal{F}($};
\cylinder{0.3}{-1.125}{0.25}{1}
\node at (0.42,-1.03) {$\cdot$};
\node at (2.55,-1) {$)\colon V \to V,\, 1 \mapsto x,\, x\mapsto x^2$};
\node at (0,-1.5) {$\mathcal{F}($};
\pop{0.45}{-1.625}{0.25}
\node at (5.6,-1.5) {$)\colon V\otimes V \to V,\, 1\otimes 1 \mapsto 1,\, 1 \otimes x\mapsto x,\, x\otimes 1 \mapsto x, \, x\otimes x \mapsto x^2$};
\node at (0,-2) {$\mathcal{F}($};
\popud{0.45}{-1.875}{0.25}
\node at (5.26,-2) {$)\colon V \to V \otimes V,\, 1 \mapsto 1\otimes x + x\otimes 1- H \cdot 1\otimes 1, \, x\mapsto x\otimes x$};
\end{tikzpicture}
\]
It is straightforward to check that $\mathcal{F}$ is a well defined functor from $\mathcal{C}ob^3_\bullet(\emptyset) $ to $\mathfrak{Mod}_{\K[H]}$, and furthermore
\[
\begin{tikzpicture}
\node at (0,0) {$\mathcal{F}($};
\spheredots{0.4}{0}{0.15}
\node at (2.45,0) {$)\colon \K[H] \to \K[H], \, 1\mapsto H.$};
\end{tikzpicture}
\]
With this it is easy to see that $\mathcal{F}$ descends to a functor $\mathcal{F}(\emptyset)\colon \tildcob\to \mathfrak{Mod}_{\K[H]}$. Finally, evaluating $H$ to $u\in \K$ gives rise to a functor $\mathcal{E}_u\colon \mathfrak{Mod}_{\K[H]} \to \mathfrak{Mod}_{\K}$. Setting $u=0$ recovers the ordinary Khovanov complex, while setting $u=1$ leads to the Bar-Natan complex described above, using the identifications $x=x_-$ and $1=x_+$.

Adding the relation
\begin{equation}\label{eq:twodotsisone}
\begin{tikzpicture}[baseline={([yshift=-.5ex]current bounding box.center)}]
\spheredots{0}{0}{0.3}
\node at (0.75,0) {$=1$};
\end{tikzpicture}
\end{equation}
to $\tildcob(B)$ allows us to go straight to $\mathfrak{Mod}_\K$ and the Bar-Natan complex, although we lose the grading this way. In practice, we treat the twice-dotted sphere as an identity which raises the grading by $2$. To distinguish it from the identity, we write $I$ for this morphism.

\section{The algorithm}

Bar-Natan's divide-and-conquer algorithm in \cite{MR2320156} can be described as follows. 

Given a complex $C^\ast$ in $\mathfrak{K}(\mathcal{C}ob^3_{\bullet /l}(B))$, the cochain objects are direct summands of objects in $\mathcal{C}ob^3_{\bullet /l}(B)$, which are embedded $1$-dimensional compact manifolds with boundary $B$. Using the Delooping Lemma \cite[Lm.3.1]{MR2320156} we can remove all the circle components in this manifold at the cost of producing more direct summands in the cochain objects. 
The result is a new cochain complex $\bar{C}^\ast$ isomorphic to $C^\ast$, but where the cochain objects have more direct summands.

Now search for direct summands $S_1$ in $\bar{C}^n$ and $S_2$ in $\bar{C}^{n+1}$ such that the matrix entry of the coboundary between these objects is an isomorphism. At this stage one may want to work with a pre-additive category where the morphism sets are vector spaces rather than abelian groups, to increase the chances of finding an isomorphism. 
Then use Gaussian Elimination \cite[Lm.3.2]{MR2320156} to replace $\bar{C}^\ast$ by a chain homotopic cochain complex $\tilde{C}^\ast$ which has fewer direct summands (namely $S_1$ and $S_2$ have been cancelled). Continue to cancel direct summands this way as long as possible to obtain a cochain complex $\hat{C}^\ast$ chain homotopy equivalent to $C^\ast$.

Given a tangle $T$, cut it into two tangles $T_1,T_2$ so that $\textlbrackdbl T \textrbrackdbl = \textlbrackdbl T_1 \textrbrackdbl \otimes \textlbrackdbl T_2 \textrbrackdbl$. Now find $\hat{C}^\ast_1 \simeq \textlbrackdbl T_1 \textrbrackdbl$ and $\hat{C}^\ast_2 \simeq \textlbrackdbl T_2 \textrbrackdbl$ inductively using the previous step and further cutting the tangles. 
Then $\textlbrackdbl T \textrbrackdbl \simeq \hat{C}^\ast_1\otimes \hat{C}^\ast_2$, and we can apply the previous step once more to the right-hand side to get $\textlbrackdbl T \textrbrackdbl \simeq \hat{C}^\ast$. We expect $\hat{C}^\ast$ to have much fewer generators than $\textlbrackdbl T \textrbrackdbl$.

In practice, if both $\hat{C}^\ast_1$ and $\hat{C}^\ast_2$ have a large number of generators, their tensor product can have too many generators to make this efficient. For this reason, \cite[\S 6]{MR2320156} introduced a \em scanning algorithm\em, where the link diagram is described as a list of tangles $T_i$, each of which may only contain one crossing. One then forms $C_k = \hat{C}_{k-1}^\ast \otimes \textlbrackdbl T_k \textrbrackdbl$ and $\hat{C}_k^\ast$ from $C^\ast_k$ via delooping and Gaussian elimination.
This way the second cochain complex in the tensor product always has a small number of generators only, and computer implementations of the scanning algorithm have proven to be rather effective.

Lemma \ref{lem:deloop} ensures we can deloop in $\mathfrak{K}(\tildcob(B))$, and Gaussian elimination \cite[Lm.3.2]{MR2320156} works for general additive categories, so these algorithms work also for the Bar-Natan complex. Of course, since \cite[Prop.2.3]{MR2320159} tells us the cohomology, we are not interested in applying the algorithms directly.

Instead we want to simplify $C^\ast(L;\F)$ to a filtered complex $\hat{C}^\ast$ such that the chain homotopy equivalence provides chain homotopy equivalences between the filtration subcomplexes. To make this precise, we now only consider the scanning algorithm where each tangle is either
\begin{equation}\label{eq:pandm}
\begin{tikzpicture}[baseline={([yshift=-.5ex]current bounding box.center)}]
\node[anchor = east]at (0,0.3) {$P=$};
\poscrossing{0}{0}{0.6}
\node[anchor=west] at (0.8,0.3) {or $M=$};
\negcrossing{2.2}{0}{0.6}
\end{tikzpicture}
\end{equation}

Let us write the resulting bigraded complex $\textlbrackdbl T \textrbrackdbl$ as $C^{\ast,\ast}(T)$ which has two generators
\[
\begin{tikzpicture}
\node[anchor=center] at (0.1,0) {$C^{0,0}(P) = $};
\smoothingud{1}{-0.3}{0.6}
\node[anchor = west] at (1.6,0) {, $C^{1,1}(P) = $};
\smoothinglr{3.7}{-0.3}{0.6}
\node[anchor = west] at (4.3,0) {, and $C^{0,0}(M) = $};
\smoothinglr{7.1}{-0.3}{0.6}
\node[anchor = west] at (7.7,0) {, $C^{1,1}(M) = $};
\smoothingud{9.8}{-0.3}{0.6}
\end{tikzpicture}
\]
and the morphism is the obvious surgery. The distinction between $P$ and $M$ here is somewhat artificial, since we have not chosen an orientation yet, and so $M$ is obtained from $P$ by a rotation. Furthermore, the same rotation identifies the cochain complexes. For now we can ignore this, but later when we consider other tangles built from $P$ and $M$, this will be useful.

Notice that we consider the first degree as the homological degree, while we consider the second degree as the quantum degree. We will slightly change the pre-additive category, and consider $C^{\ast,\ast}(P)$ an object in $\mathfrak{K}(\barcob{\K}(\partial P))$, where $\barcob{\K}(B)$ is the pre-additive category whose objects are the same as in $\mathcal{C}ob^3_\bullet(B)$, but the morphism sets are free $\K$-modules rather than free abelian groups, and we use the relations (\ref{eq:dots}), (\ref{eq:cylrel}) and (\ref{eq:twodotsisone}).
We can also consider this a graded pre-additive category by formally introducing a quantum grading, and assume the generator for $C^{\ast,q}$ having quantum degree $q$.

Now assume our knot diagram $D$ is given as a sequence of tangles $T_1,T_2,\ldots,T_n$ where $n$ is the number of crossings in $D$, and each $T_i$ is either $P$ or $M$. Since $D$ represents a knot, every crossing has a well defined sign $\varepsilon(T_i)\in \{\pm 1\}$. Notice that $\varepsilon(P)=-\varepsilon(M)$ if we assume the orientations to agree on the endpoints. 

Let $n_+$ be the number of crossings with sign $+1$, and $n_-$ be the number of crossings with sign $-1$. Recall that the Khovanov complex of $D$ gets a global shift in homological degree and in quantum degree based on $n_+$ and $n_-$, and it will be convenient for us to apply this shift to the first tangle. We therefore start with
\begin{equation}\label{eq:algstart}
 C_1^{\ast,\ast} = C^{\ast+n_-,\ast+2n_--n_+}(T_1),
\end{equation}
and for $1<i\leq n$ we form
\[
 C_i^{\ast,\ast} = C_{i-1}^{\ast,\ast} \otimes C^{\ast,\ast}(T_i),
\]
using the usual rule for forming tensor products of cochain complexes. Note that this is an object in $\mathfrak{K}(\barcob{\K}(\partial T_i'))$, where $T_i'$ is the obvious sub-diagram of $D$ made up from the tangles $T_1$ to $T_i$. Also, there is a functor $\overline{\mathcal{F}}\colon \barcob{\K}(\emptyset) \to \mathfrak{Mod}_\K$, defined analogously to $\mathcal{F}$, with  $\overline{\mathcal{F}}(C^{\ast,\ast}) = C^{\ast,\ast}(D;\K)$.

Each $C^{\ast,\ast}_i$ has a filtration coming from quantum degrees, given by
\[
 \mathcal{F}_qC_i^{\ast} = \bigoplus_{j\geq q} C^{\ast,j}_i.
\]
and each $\mathcal{F}_qC_i^h$ decomposes as a direct summand of objects in $\barcob{\K}(\partial T_i')$ with quantum grading at least $q$.

The idea is to apply delooping and Gaussian elimination without disturbing the filtrations. There is an immediate problem in that the delooping isomorphism does not respect the filtrations. Gaussian elimination on the other hand behaves well with filtrations, provided the isomorphism along which we cancel respects quantum degrees.

We claim that by carefully performing Gaussian elimination, we get a filtered cochain complex homotopy equivalent to the Bar-Natan complex, so that the filtration subcomplexes are also chain homotopy equivalent. The resulting algorithm can be described as follows. We assume that we have a knot diagram $D$ with $n$ crossings, which are listed as tangles $T_1,\ldots,T_n$.

{\bf Step 1.} Form $C_1^{\ast,\ast}$ using (\ref{eq:algstart}).

{\bf Step 2.} Assuming we have $C_{i-1}^{\ast,\ast}$ for some $i>1$ with $i\leq n$, form
\begin{equation}\label{eq:algtensor}
\tilde{C}_i^{\ast,\ast} = C_{i-1}^{\ast,\ast} \otimes C^{\ast,\ast}(T_i).
\end{equation}
{\bf Step 3.} Form $\bar{C}_i^{\ast,\ast}$ as the result of delooping $\tilde{C}_i^{\ast,\ast}$.

{\bf Step 4.} In $\bar{C}_i^{\ast,\ast}$ search for morphisms $k\cdot \id\colon S_q \to S_q$, where $k\in \K$ is a unit, and $S_q$ is an object in $\barcob{\K}(T_i')$ which is a direct summand in both $\bar{C}_i^{m,q}$ and $\bar{C}_i^{m+1,q}$, and perform Gaussian elimination along such a morphism.

Continue until no such morphisms can be found, and call the resulting cochain complex $C_i^{\ast,\ast}$.

{\bf Step 5.} If $i<n$, repeat from Step 2.

This algorithm ends with a cochain complex $C^{\ast,\ast}_n$ with the property that $\overline{\mathcal{F}}(C^{\ast,\ast}_n)$ is chain homotopy equivalent to the Bar-Natan complex $C^{\ast,\ast}(D;\K)$. But more importantly, the resulting filtration on this complex behaves well with respect to the filtration of the Bar-Natan complex.

\begin{proposition}\label{prop:goodfiltration}
Denote $C^{\ast,\ast} = \overline{\mathcal{F}}(C^{\ast,\ast}_n)$, and let
\[
 \mathcal{F}_qC^\ast = \bigoplus_{j\geq q} C^{\ast,j}.
\]
Then there is a chain homotopy equivalence $\varphi\colon C^{\ast,\ast} \to  C^{\ast,\ast}(D;\K)$ which restricts to chain homotopy equivalences $\varphi_q\colon \mathcal{F}_qC^\ast \to \mathcal{F}_qC^\ast(D;\K)$ for all $q\in \Z$.
\end{proposition}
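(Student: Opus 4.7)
The plan is to lift the entire argument one categorical level: work inside $\mathfrak{K}(\barcob{\K}(\emptyset))$ throughout, and only apply $\overline{\mathcal{F}}$ at the very end to descend to the filtered statement. Specifically, I aim to prove the stronger assertion that the algorithm produces a chain homotopy equivalence $\Phi_n\colon \textlbrackdbl D\textrbrackdbl \simeq C^{\ast,\ast}_n$ in $\mathfrak{K}(\barcob{\K}(\emptyset))$ whose chain maps, quasi-inverse, and two chain homotopies are all morphisms of degree zero in the graded Hom, i.e.\ preserve the quantum grading exactly.

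The main induction is on $i$: at stage $i$, I produce a grading-zero chain homotopy equivalence $\Phi_i\colon \textlbrackdbl T_i'\textrbrackdbl \simeq C_i^{\ast,\ast}$ in $\mathfrak{K}(\barcob{\K}(\partial T_i'))$ (with the bidegree shifts of (\ref{eq:algstart}) built into the left-hand side), together with a grading-zero quasi-inverse and chain homotopies. The base case $i = 1$ is immediate. For the inductive step I check each substep separately. Step 2 tensors $\Phi_{i-1}$ with $\id_{C^{\ast,\ast}(T_i)}$ and automatically preserves grading-zero morphisms. Step 3 applies Lemma \ref{lem:deloop} to every circle in every summand, producing a grading-zero isomorphism $\tilde C_i \to \bar C_i$ in $\mathfrak{K}(\mathfrak{M}(\barcob{\K}(\partial T_i')))$. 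Step 4 performs Gaussian elimination only along $k\cdot\id\colon S_q\to S_q$, a grading-zero morphism between summands of the same quantum grade; the Schur-complement formula of \cite[Lm.3.2]{MR2320156} then builds the new differential entries and the strong deformation retract data as compositions of pre-existing grading-zero morphisms with $k^{-1}\cdot\id$, so everything remains grading-zero.

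Granted $\Phi_n$, the conclusion follows by applying $\overline{\mathcal{F}}$ and invoking the general principle that a grading-zero morphism in $\barcob{\K}(\emptyset)$ descends to an $\F$-linear map respecting the quantum-degree filtration. The essential point is that $\overline{\mathcal{F}}$ sends the formal morphism $I$ (of graded degree $+2$) to the actual identity map via relation (\ref{eq:twodotsisone}), so a grading-zero graded morphism, written as a linear combination of cobordism compositions involving various powers of $I$, descends to a sum of pieces each of which keeps $\mathcal{F}_q$ inside itself. Hence $\varphi := \overline{\mathcal{F}}(\Phi_n)$, its inverse, and the two chain homotopies all restrict to maps $\mathcal{F}_qC^\ast \to \mathcal{F}_qC^\ast(D;\K)$ for every $q\in\Z$, yielding the claimed chain homotopy equivalence of filtration subcomplexes. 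The main technical obstacle will be Step 3: one has to verify that the delooping formula of Lemma \ref{lem:deloop}, which differs from the Khovanov case because of relation (\ref{eq:cylrel}), still assembles into a grading-zero graded isomorphism—the extra summand involving the two-dotted sphere must contribute precisely the right graded-Hom degree to compensate. Once this is in place, the remainder of the argument is uniform grade bookkeeping through tensors and eliminations.
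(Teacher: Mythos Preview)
Your approach has a genuine gap, and it is precisely the one the paper flags in the sentence ``There is an immediate problem in that the delooping isomorphism does not respect the filtrations.'' You propose to carry a notion of ``degree-zero in the graded Hom'' through each step of the algorithm, but in $\barcob{\K}(B)$ the morphism sets are \emph{not} graded: relation~(\ref{eq:twodotsisone}) identifies the degree-$+2$ morphism $I$ with the degree-$0$ identity, and the paper says explicitly that ``we lose the grading this way.'' If instead your inductive hypothesis is that the maps preserve the formal-shift filtration $\mathcal{F}_q = \bigoplus_{j\ge q}$, then Step~3 breaks it: the delooping map $S\{q\}\to S'\{q+1\}\oplus S'\{q-1\}$ has a component landing in strictly lower shift. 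You correctly identify this as ``the main technical obstacle'' but never resolve it; your remark that ``the extra summand involving the two-dotted sphere must contribute precisely the right graded-Hom degree to compensate'' does not help, because the offending component $S\{q\}\to S'\{q-1\}$ is the undotted death, which involves no $I$ at all.

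The paper's proof takes a completely different route that sidesteps this difficulty. Rather than tracking a grading through each step, it \emph{reorders} the operations: it shows that the end result of $(\text{tensor},\text{deloop},\text{Gauss})^n$ is literally the same complex as $(\text{tensor},\text{deloop})^n$ followed by many Gaussian eliminations at the very end. The technical content is a commutation argument---Gaussian elimination commutes with a subsequent tensor product, and (via a short case analysis on whether the circle being delooped sits in $A^n$, $A^{n+1}$, $B^n$, or $B^{n+1}$) with a subsequent delooping. After reordering, all objects are of the form $\emptyset\{q\}$, each Gaussian elimination is along $k\cdot\id\colon\emptyset\{q\}\to\emptyset\{q\}$, and Lemma~\ref{lem:gausselim} then visibly restricts to every $\mathcal{F}_q$. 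Your inductive scheme could be rescued by working throughout in $\tildcob$ (where the grading genuinely exists) rather than in $\barcob{\K}$, and only imposing~(\ref{eq:twodotsisone}) after applying the functor at the end; but as written the proposal does not do this.
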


We prove Proposition \ref{prop:goodfiltration} in Section \ref{sec:proofofthm}. For now recall that we are interested in determining the $s$-invariant of the knot $K$ represented by $D$, so let us specialize to the case $\K=\F$ is a field. Observe that for $q$ odd we get
\[
 \mathcal{F}_q C^n / \mathcal{F}_{q+2} C^n \cong \Kh^{n,q}(K;\F),
\]
the Khovanov cohomology of $K$ with coefficients in $\F$. To see this note that the cohomology of the quotient complex is the Khovanov cohomology by Proposition \ref{prop:goodfiltration}, but the quotient complex has trivial coboundary, for otherwise we could still have done more cancellations in Step 4 for $i=n$.

We now want to calculate the $s$-invariant directly from the definition, but using the filtered complex $C^{\ast,\ast}$. Notice that $C^{\ast,\ast}$ comes with an explicit basis, which comes from the objects of $C^{\ast,\ast}_n$, which after all is a cochain complex over the category $\barcob{\F}(\emptyset)$. We therefore have a basis of each $C^{i,q}$ which we denote by $c^{i,q}_1,\ldots,c^{i,q}_{k_{i,q}}$.

Denote the coboundary of $C^{\ast,\ast}$ by $\delta$. Notice that $\delta(c^{i,q}_j)$ is a linear combination of basis elements $c^{i+1,q'}_l$ with $q'>q$. Since $H^0(C^{\ast,\ast}) \cong \F^2$, we can do Gaussian elimination on basis elements $c^{0,q}_j$ until there are only two left. If $\delta(c^{0,q}_j)\not=0$, we can do Gaussian elimination of $c^{0,q}_j$ against any $c^{1,q'}_l$ appearing in the coboundary with a non-zero coefficient. 
We can also do Gaussian elimination of $c^{0,q}_j$ if it appears in the coboundary of some $c^{-1,q''}_m$ with a non-zero coefficient. Basically, we want to eliminate basis elements of homological degree $0$ until there are only two left, and then read off the $s$-invariant from the quantum degree of the two remaining generators. For this to work, we need to do the cancellations in the right order.

The filtration $\mathcal{F}_qC^\ast \subset C^{\ast,\ast}$ induces maps on cohomology, and \cite[Prop.2.6]{MR3189434} tells us that there is a unique $s\in 2\Z$ such that
\[
 i^\ast\colon H^0(\mathcal{F}_{s-1}C) \to H^0(C)
\]
is surjective, and
\[
 i^\ast\colon H^0(\mathcal{F}_{s+1}C) \to H^0(C)
\]
is non-zero, but not surjective. Furthermore, for $q>s+1$ the map on cohomology is trivial. The correct order to cancel the remaining generators can now be described in the continuation of our algorithm as follows.

{\bf Step 6.} Denote $D_1^\ast = C^{\ast,\ast}$, and choose the basis $c^{i,q}_j$ from above.

{\bf Step 7.} Assume we have $D_{i-1}^\ast$ for some $i>1$ with a basis. If the coboundary $\delta\colon D_{i-1}^0 \to D_{i-1}^1$ is non-zero, choose a basis element $c^{0,q}_j$ with maximal $q$ such that $\delta(c^{0,q}_j)\not=0$, and form $D_i^\ast$ by performing Gaussian elimination of $c^{0,q}_j$ with a basis element $c^{1,q'}_{j'}$. The basis of $D_i^\ast$ is obtained from the basis of $D_{i-1}^\ast$ by removing the two cancelled elements.

{\bf Step 8.} Repeat Step 7 until the coboundary $\delta \colon D^0_i \to D^1_i$ is zero.

{\bf Step 9.} Assume we have $D_{k-1}^\ast$ for some $k>i$ with a basis. If the coboundary $\delta\colon D_{i-1}^{-1} \to D_{i-1}^0$ is non-zero, choose a basis element $c^{0,q}_j$ with minimal $q$ such that $c^{0,q}_j$ appears with non-zero coefficient in $\delta(c^{-1,q'}_{j'})$ for some basis element $c^{-1,q'}_{j'}$. Then form $D_k^\ast$ by performing Gaussian elimination of $c^{0,q}_j$ with $c^{-1,q'}_{j'}$. The basis of $D_k^\ast$ is obtained from the basis of $D_{k-1}^\ast$ by removing the two cancelled elements.

{\bf Step 10.} Repeat Step 9 until the coboundary $\delta \colon D^{-1}_k \to D^0_k$ is zero.

\begin{proposition}\label{prop:readoffsinv}
The final based cochain complex $D_k^\ast$ satisfies $D_k^0=\F^2$, and the two basis elements in homological degree $0$ have quantum degree $s(K)+1$ and $s(K)-1$, respectively.
\end{proposition}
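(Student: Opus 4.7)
The plan is to first establish $D_k^0\cong\F^2$, and then to show that the quantum degrees of the two surviving basis elements match the jumps of the Bar--Natan filtration on $H^0(C^\ast)$, which by \cite[Prop.2.6]{MR3189434} occur at $s(K)\pm 1$. The first statement will be immediate: each Gaussian elimination in Steps~7 and 9 is a chain homotopy equivalence, so $H^0(D_k^\ast)\cong H^0(C^\ast)\cong\F^2$ by Proposition~\ref{prop:goodfiltration} and \cite[Prop.2.3]{MR2320159}, and the stopping criteria of Steps~8 and 10 force both $\delta\colon D_k^{-1}\to D_k^0$ and $\delta\colon D_k^0\to D_k^1$ to vanish, whence $H^0(D_k^\ast)=D_k^0\cong\F^2$.

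For the quantum degrees, I will equip each intermediate $D_i^\ast$ with the filtration $\mathcal{F}_q D_i^n=\bigoplus_{q_j\geq q}\F\, c^{n,q_j}_j$ coming from its current basis, and set $F_q H^0(D_i^\ast):=\im\bigl(H^0(\mathcal{F}_q D_i^\ast)\to H^0(D_i^\ast)\bigr)$. Proposition~\ref{prop:goodfiltration} identifies $F_q H^0(D_1^\ast)$ with the Bar--Natan filtration on $H^0(K;\F)\cong\F^2$, whose jumps sit at $s-1$ and $s+1$. The goal reduces to showing each Gaussian elimination in Steps~7 and 9 preserves $F_q$; granting that, since $\delta$ vanishes on $D_k^0$ in both directions one has $F_q H^0(D_k^\ast)=\mathcal{F}_q D_k^0$, whose jumps are precisely the quantum degrees $q_1>q_2$ of the two surviving basis vectors, forcing $q_1=s+1$ and $q_2=s-1$.

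For Step~7, I will exploit maximality of $q$ together with the fact that after Step~4 the coboundary strictly raises quantum degree by at least $2$: this implies every basis vector of $D^0$ of quantum degree strictly greater than $q$ is already a cocycle, and the change of basis produced by cancelling $(c^{0,q}_b, c^{1,q'}_{j'})$ only modifies basis vectors of quantum degree at most $q$. Hence for $r\leq q$ the cancellation pair lies inside $\mathcal{F}_r D^\ast$ and gives a filtered chain homotopy equivalence there, while for $r>q$ the subcomplex $\mathcal{F}_r D^{-1}\to\mathcal{F}_r D^0$ is literally unchanged as a chain complex.

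The hard part will be Step~9 in the straddling regime $q'<r\leq q$, where $c^{0,q}_j\in\mathcal{F}_r$ but its partner $c^{-1,q'}_{j'}\notin\mathcal{F}_r$. Writing $\delta(c^{-1,q'}_{j'})=k\, c^{0,q}_j+\sum_l\alpha_l c^{0,q_l}_l$, the minimality of $q$ in Step~9 guarantees $\delta(D^{-1})\subseteq\mathcal{F}_q D^0$, hence every $q_l\geq q\geq r$. The chain map $\pi\colon D^\ast\to D'^\ast$ sending $c^{0,q}_j\mapsto -k^{-1}\sum_l\alpha_l c^{0,q_l}_l$, $c^{-1,q'}_{j'}\mapsto 0$, and acting as the identity on every other surviving basis vector is therefore filtered on $D^{-1}$ and $D^0$, which gives $\pi(F_r H^0(D^\ast))\subseteq F_r H^0(D'^\ast)$. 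The reverse inclusion will follow by lifting a representative $y\in\mathcal{F}_r D'^0$ back along the set-theoretic inclusion $D'^0\hookrightarrow D^0$: by Step~8 every element of $D^0$ is automatically a cocycle and $\pi(y)=y$, so $[y]\in F_r H^0(D^\ast)$ maps to the given class. This min-$q$ condition is precisely what controls where $\delta(c^{-1,q'}_{j'})$ can land and is the crux of the argument.
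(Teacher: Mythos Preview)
Your proposal is correct and follows essentially the same route as the paper: track the image filtration $F_qH^0$ through each Gaussian elimination, using maximality of $q$ in Step~7 and minimality of $q$ in Step~9 to control where the relevant coboundaries land. Your treatment is somewhat more explicit than the paper's---you spell out the chain map $\pi$ and the lifting argument for the reverse inclusion in the straddling regime $q'<r\le q$, whereas the paper simply asserts that the image of $H^0(\mathcal{F}_rD_k)$ in $\F^2$ agrees with that of $H^0(\mathcal{F}_rD_{k-1})$ once one knows $\delta(k\cdot c^{-1,q'})=c^{0,q}-x$ with $x\in\mathcal{F}_qD^0$---but the underlying idea is identical.
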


\begin{remark}
Steps 1-10 therefore give an algorithm to calculate the $s$-invariant of a knot over a field $\F$. The first five steps give us a smaller version of the Bar-Natan complex, and the last five steps reduce the $0$-th group until we can read off the invariant.

As it stands, we still calculate the full Khovanov cohomology as a side effect, and since the coboundary of the Bar-Natan complex is more complicated than the Khovanov coboundary, our cancellation technique is less effective. We will see however in Section \ref{sec:analyze} that we can do the same cancellations as if working on the Khovanov complex. The efficiency will still be a bit worse, as we need to keep track of the Bar-Natan deformation.

To regain some efficiency, notice that the final five steps focus on the complex in degree $0$. During the first five steps we operate on objects with positive homological degree which are never used again. It is therefore tempting to simply disregard such elements. The resulting cochain complex will not have the same chain homotopy type, but will have the right cohomology in degree $0$. This can be made to work, and leads to noticable practical improvements.
\end{remark}

The aforementioned disregarding of certain objects can be implemented by slightly changing Step 4. The replacement is given as follows.

{\bf Step 4'.}  In $\bar{C}_i^{\ast,\ast}$ search for morphisms $k\cdot \id\colon S_q \to S_q$, where $k\in \K$ is a unit, and $S_q$ is an object in $\barcob{\K}(T_i')$ which is a direct summand in both $\bar{C}_i^{m,q}$ and $\bar{C}_i^{m+1,q}$ with $m\in \{-2-n+i,\ldots,1\}$, and perform Gaussian elimination along such a morphism.

Continue until no such morphisms can be found, set the $m$-th cochain group to $0$ for $m\notin \{-1-n+i,\ldots,1\}$ and call the resulting cochain complex $C_i^{\ast,\ast}$.

Notice that the final cochain complex $C^{\ast,\ast}_n$ has only non-zero cochain groups in homological degree $-1,0,1$. It also has a filtration coming from the quantum degree so we can look at the maximal $q$ for which $H^0(\mathcal{F}_qC) \to H^0(C)$ is surjective and call this value $s-1$. The next Lemma states that this gives the right value.

\begin{lemma}\label{lem:samesinv}
The value $s$ agrees with the $s$-invariant of the knot.
\end{lemma}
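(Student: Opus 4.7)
The plan is to reduce to Proposition \ref{prop:readoffsinv} by comparing the filtered $H^0$ of $C_n^{\ast,\ast}$ (output of Step 4') with that of the Bar-Natan complex. The key observation is that the filtered $H^0$ of a cochain complex depends only on the three cochain groups in homological degrees $-1,0,1$ and the two boundary maps between them; equivalently, the brutal truncation $\tau_{[-1,1]}$ (which zeroes out cochain groups outside $\{-1,0,1\}$) preserves filtered $H^0$, and the same holds after passing to filtration subcomplexes $\mathcal{F}_qC$. Hence it suffices to show that $C_n^{\ast,\ast}$ is filtered chain-homotopy equivalent to $\tau_{[-1,1]}\hat{C}_n^{\ast,\ast}$, where $\hat{C}_n^{\ast,\ast}$ denotes the output of the untruncated Step 4 at iteration $n$, because Proposition \ref{prop:goodfiltration} identifies $\hat{C}_n^{\ast,\ast}$ with the Bar-Natan complex as filtered complexes.

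The core is an induction on $i$ with the invariant: $C_i^{\ast,\ast}\simeq \tau_{[-1-n+i,1]}\hat{C}_i^{\ast,\ast}$ as filtered complexes. The base case $i=1$ is immediate because (\ref{eq:algstart}) is supported in homological degrees $\{-n_-,-n_-+1\}\subseteq\{-n,\ldots,1\}$, so no truncation occurs and $C_1^{\ast,\ast}=\hat{C}_1^{\ast,\ast}$. For the inductive step, two facts drive the argument. First, tensoring with $C^{\ast,\ast}(T_i)$, supported in degrees $\{0,1\}$, only mixes adjacent homological degrees: the degree-$m$ summand of the tensor product uses degrees $m$ and $m-1$ of the previous stage. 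Hence $\tau_{[-1-n+i,1]}(C_{i-1}^{\ast,\ast}\otimes C^{\ast,\ast}(T_i))$ is determined by $\tau_{[-2-n+i,1]}C_{i-1}^{\ast,\ast}$, which by the inductive hypothesis matches $\tau_{[-2-n+i,1]}\hat{C}_{i-1}^{\ast,\ast}$. Second, Gaussian elimination along unit multiples of identity morphisms is a filtered chain-homotopy equivalence; Step 4' permits the extended range $\{-2-n+i,\ldots,1\}$ of elimination, which is precisely what is required to cancel isomorphism components straddling the truncation boundary at $-1-n+i$ before truncation discards their sources. Together these produce the desired $C_i^{\ast,\ast}\simeq \tau_{[-1-n+i,1]}\hat{C}_i^{\ast,\ast}$.

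At $i=n$ the invariant reads $C_n^{\ast,\ast}\simeq \tau_{[-1,1]}\hat{C}_n^{\ast,\ast}$, and combining with Proposition \ref{prop:goodfiltration} shows that the filtered $H^0$ of $C_n^{\ast,\ast}$ matches that of the Bar-Natan complex $C^{\ast,\ast}(D;\F)$. Consequently the maximal $q$ for which $H^0(\mathcal{F}_qC_n)\to H^0(C_n)$ is surjective equals the corresponding threshold for the Bar-Natan complex, namely $s(K)-1$, so $s=s(K)$. The main technical hurdle is that brutal truncation is not strictly functorial under chain homotopy equivalence in general, so one must verify that the chain homotopies produced by Gaussian elimination within the extended range actually descend to the truncated quotient. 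The resolution is that these homotopies are supported on the canceled direct summands, which all sit inside $\{-2-n+i,\ldots,1\}$; after truncation only components with at least one endpoint in degree $-2-n+i$ are disturbed, and these are harmless for both the filtration-preserving property and the eventual $H^0$ computation.
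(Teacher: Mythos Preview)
Your overall strategy is right: show that the filtered $H^0$ of the Step 4' output $C_n^{\ast,\ast}$ matches that of the Bar-Natan complex, using that $H^0$ and its filtration only see degrees $-1,0,1$. But the inductive invariant you propose, $C_i^{\ast,\ast}\simeq \tau_{[-1-n+i,1]}\hat{C}_i^{\ast,\ast}$, is not established by your argument, and may well be false as stated.

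The difficulty is that $\hat{C}_i$ (the Step-4 output) is produced by tensoring and eliminating from $\hat{C}_{i-1}$, whereas $C_i$ is produced from $C_{i-1}$; the inductive hypothesis only relates $C_{i-1}$ and $\tau\hat{C}_{i-1}$ by a \emph{homotopy} equivalence. To pass to the next stage you would need to truncate a homotopy equivalence, and as you correctly note, brutal truncation is not functorial for chain homotopy equivalences. Your final paragraph concedes this and asserts the failure is ``harmless for the eventual $H^0$ computation'', but that is exactly what needs proof; nothing you have written controls, for instance, how extra eliminations performed in $\hat{C}_i$ at degrees $\ge 2$ interact with the objects that survive in degree $1$ of $\tau\hat{C}_i$, nor why the homotopy-equivalence maps respect the truncation.

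The paper avoids this problem by a small but decisive change of viewpoint. Rather than comparing $C_i$ with the truncation of the \emph{fully} eliminated $\hat{C}_i$, it builds an auxiliary sequence of complexes (still chain-homotopy-equivalent to the partial Bar-Natan complex, since only Gaussian eliminations are performed) in which one does on the untruncated side \emph{exactly the same} eliminations as on the truncated side, and no more. The upshot is that the truncated output $D_i$ is then \emph{literally equal} to the auxiliary complex in the range $\{-2-n+i,\ldots,1\}$, not merely homotopy-equivalent to a truncation. Literal equality needs no functoriality under truncation, and at $i=n$ it gives the commuting square for $H^0(\mathcal{F}_q)\to H^0$ on the nose. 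Your argument becomes correct if you replace the invariant by: ``there exists a complex $E_i$, obtained from the partial Bar-Natan complex by filtration-preserving Gaussian eliminations, such that $C_i=\tau_{[-1-n+i,1]}E_i$ as filtered complexes (equality, not homotopy)''. That is precisely the paper's proof.
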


We can therefore calculate the $s$-invariant of a knot with the algorithm obtained by replacing Step 4 with Step 4'.

\section{Proofs}
\label{sec:proofofthm}

It will be useful for us to restate Gaussian elimination from \cite[Lm.3.2]{MR2320156} in a way that makes the chain homotopy equivalences more visible.

\begin{lemma}[Gaussian Elimination]\label{lem:gausselim}
Let $\mathfrak{C}$ be an additive category and $(C^\ast,c^\ast)$ be a cochain complex over $\mathfrak{C}$ such that $C^n=A^n\oplus B^n$, $C^{n+1}=A^{n+1}\oplus B^{n+1}$, and the coboundary $c^n\colon C^n\to C^{n+1}$ is represented by a matrix
\[
 c^n=\begin{pmatrix} \varphi & \delta \\ \gamma & \varepsilon \end{pmatrix}
\]
with $\varphi\colon A^n \to A^{n+1}$ an isomorphism. Then $C^\ast$ is chain homotopy equivalent to a cochain complex $(D^\ast,d^\ast)$ with $D^k=C^k$ for $k\not=n,n+1$, $D^k=B^k$ for $k=n,n+1$, $d^k=c^k$ for $k\not=n-1,n,n+1$. Furthermore, we have a commutative ladder between $C^\ast$ and $D^\ast$ with the vertical morphisms chain homotopy equivalences.
\begin{equation}\label{eq:gaussladder}
\begin{tikzpicture}[baseline={([yshift=-.5ex]current bounding box.center)}]
\node at (0,0) {$C^{n-1}$};
\node at (2.8,0) {$A^n\oplus B^n$};
\node at (6.4,0) {$A^{n+1}\oplus B^{n+1}$};
\node at (9.6,0) {$C^{n+2}$};
\draw[->] (0.5,-0.05) -- (2,-0.05);
\draw[->] (3.6,-0.05) -- (5.3,-0.05);
\draw[->] (7.5,-0.05) -- (9.1,-0.05);
\draw[->] (10.1,-0.05) -- (10.5,-0.05);
\draw[->] (-0.9,-0.05) -- (-0.5,-0.05);
\node at (4.4,0.5) {$\begin{pmatrix}\varphi & \delta \\ \gamma & \varepsilon \end{pmatrix}$};
\node at (1.2,0.5) {$\begin{pmatrix}\alpha \\ \beta \end{pmatrix}$};
\node at (8.3,0.3) {$\begin{pmatrix}\mu & \nu \end{pmatrix}$};

\node at (0,-2) {$C^{n-1}$};
\node at (2.8,-2) {$B^n$};
\node at (6.4,-2) {$B^{n+1}$};
\node at (9.6,-2) {$C^{n+2}$};
\draw[->] (0.5,-2.05) -- (2.4,-2.05);
\draw[->] (3.2,-2.05) -- (5.9,-2.05);
\draw[->] (7,-2.05) -- (9.1,-2.05);
\draw[->] (10.1,-2.05) -- (10.5,-2.05);
\draw[->] (-0.9,-2.05) -- (-0.5,-2.05);
\node at (1.4,-1.8) {$\beta$};
\node at (4.5,-1.8) {$\varepsilon - \gamma \varphi^{-1}\delta$};
\node at (8.1,-1.8) {$\nu$};

\draw[->] (-0.1,-0.2) -- (-0.1,-1.8);
\draw[->] (0.1,-1.8) -- (0.1,-0.2);
\draw[->] (2.7,-0.2) -- (2.7,-1.8);
\draw[->] (2.9,-1.8) -- (2.9,-0.2);
\draw[->] (6.3,-0.2) -- (6.3,-1.8);
\draw[->] (6.5,-1.8) -- (6.5,-0.2);
\draw[->] (9.5,-0.2) -- (9.5,-1.8);
\draw[->] (9.7,-1.8) -- (9.7,-0.2);
\node at (-0.3,-1) {$1$};
\node at (0.3,-1) {$1$};
\node at (2.1,-1) {$\begin{pmatrix} 0 & 1 \end{pmatrix}$};
\node[scale = 0.8] at (3.6,-1) {$\begin{pmatrix} -\varphi^{-1}\delta \\ 1\end{pmatrix}$};
\node[scale = 0.8] at (5.5,-1) {$\begin{pmatrix} -\gamma\varphi^{-1} & 1 \end{pmatrix}$};
\node at (6.9,-1) {$\begin{pmatrix} 0 \\ 1\end{pmatrix}$};
\node at (9.3,-1) {$1$};
\node at (9.9,-1) {$1$};
\end{tikzpicture}
\end{equation}

\end{lemma}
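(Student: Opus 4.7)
The plan is to construct the chain homotopy equivalence explicitly, matching the diagram (\ref{eq:gaussladder}) term by term, and then verify the required identities by direct block-matrix calculation. First I would set $D^k=C^k$, $d^k=c^k$ for $k\notin\{n-1,n,n+1\}$, together with $D^n=B^n$, $D^{n+1}=B^{n+1}$. Writing $c^{n-1}=\begin{pmatrix}\alpha\\ \beta\end{pmatrix}\colon C^{n-1}\to A^n\oplus B^n$ and $c^{n+1}=\begin{pmatrix}\mu & \nu\end{pmatrix}\colon A^{n+1}\oplus B^{n+1}\to C^{n+2}$, I would then define $d^{n-1}=\beta$, $d^n=\varepsilon-\gamma\varphi^{-1}\delta$, and $d^{n+1}=\nu$.

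To check that $D^\ast$ is a cochain complex, I would use the two identities $c^nc^{n-1}=0$ and $c^{n+1}c^n=0$. Expanding the first gives $\varphi\alpha+\delta\beta=0$ and $\gamma\alpha+\varepsilon\beta=0$, so $\alpha=-\varphi^{-1}\delta\beta$, and substituting yields $(\varepsilon-\gamma\varphi^{-1}\delta)\beta=0$, i.e.\ $d^nd^{n-1}=0$. Similarly $c^{n+1}c^n=0$ gives $\mu=-\nu\gamma\varphi^{-1}$ and $\nu(\varepsilon-\gamma\varphi^{-1}\delta)=0$, i.e.\ $d^{n+1}d^n=0$. These same relations will imply that the vertical arrows in (\ref{eq:gaussladder}), with matrices $(0,1)$ and $(-\gamma\varphi^{-1},1)$ on the downward side and $(-\varphi^{-1}\delta,1)^T$ and $(0,1)^T$ on the upward side, are honest chain maps; each of the four squares reduces to one of the block relations just derived.

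Finally I would exhibit the chain homotopy. The downward composite applied after the upward composite is the identity on $D^\ast$ on the nose, so only the other composite $g\circ f$ requires a homotopy. It differs from $\id_{C^\ast}$ only in degrees $n$ and $n+1$, and one checks that the single homotopy $h\colon C^{n+1}\to C^n$ with block form $\begin{pmatrix}\varphi^{-1} & 0\\ 0 & 0\end{pmatrix}$ (and $h^k=0$ otherwise) satisfies $\id-g\circ f=c\cdot h+h\cdot c$ in both degrees. There is no conceptual obstacle here: the whole lemma is a bookkeeping exercise in block matrices, and the only thing one must be careful about is tracking the correct order of composition and the placement of $\varphi^{-1}$ in the single nonzero component of $h$.
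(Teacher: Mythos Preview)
Your proposal is correct and matches the paper's proof essentially line for line: the paper also verifies that the ladder commutes and the bottom row is a complex using $c^{k+1}c^k=0$, observes $f\circ g=\id_{D^\ast}$, and exhibits the same homotopy $H=\begin{pmatrix}\varphi^{-1}&0\\0&0\end{pmatrix}$ to handle $g\circ f$. The only difference is that you spell out the block relations explicitly whereas the paper leaves them as ``straightforward''.
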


\begin{proof}
Using that $c^{k+1}\circ c^k=0$, it is straightforward to see that the diagram commutes, and the lower row represents a cochain complex. Therefore the downward map $f\colon C^\ast\to D^\ast$ and the upward map $g\colon D^\ast \to C^\ast$ are cochain maps. Clearly $f\circ g=\id_{D^\ast}$, and if we define $H\colon A^{n+1}\oplus B^{n+1} \to A^n \oplus B^n$ by the matrix
\[
 H = \begin{pmatrix} \varphi^{-1} & 0 \\ 0 & 0 \end{pmatrix},
\]
we have $H\circ c^n = \id_{C^n} - g^n\circ f^n$ and $c^n\circ H = \id_{C^{n+1}}-g^{n+1}\circ f^{n+1}$, which means that $f$ and $g$ are chain homotopy equivalences.
\end{proof}

If the cochain complex $C^\ast$ comes with a quantum grading so that for the filtration
\[
 \mathcal{F}_qC^\ast=\bigoplus_{j\geq q} C^{\ast,j}
\]
we have $c^\ast\colon \mathcal{F}_qC^\ast \to \mathcal{F}_qC^{\ast+1}$ for all $q$, that is, each $\mathcal{F}_qC^\ast$ is a subcomplex, we may be able to apply Gaussian elimination to each filtered subcomplex. But this means that we need that $\varphi$ restricts to an isomorphism $\varphi\colon \mathcal{F}_qA^n \to \mathcal{F}_qA^{n+1}$. This is why in Step 4 the morphism is required to preserve $q$.

\begin{proof}[Proof of Proposition \ref{prop:goodfiltration}]
In the first five steps of the algorithm we get a cochain complex via the tensor product, then we deloop it, and then we perform Gaussian elimination, before we repeat these three steps. We claim that we can get the same cochain complex by repeatedly taking the tensor product and delooping, and finally doing (many more) Gaussian eliminations.
But doing the Gaussian eliminations at the last step preserves quantum degrees, so the result follows by the above discussion.

First notice that performing Gaussian elimination from a cochain complex $C^\ast$ to $D^\ast$, and then taking the tensor product with another cochain complex $E^\ast$ has the same result as first taking the tensor product, and then performing Gaussian elimination. To see this, we only need to add $\otimes E^\ast$ and $\otimes 1_{E^\ast}$ in several places of the commutative ladder in Lemma \ref{lem:gausselim}.
Notice that we require $\varphi$ to be a multiple of the identity, and this does not change if we first do the tensor product. The quantum degree of each individual cancellation also remains the same.

We now want to switch the order of performing a Gaussian elimination and the delooping of circles. Assume we perform a Gaussian elimination as in (\ref{eq:gaussladder}). If the delooping does not take place in $A^n,A^{n+1},B^n$ or $B^{n+1}$, it is clear that we can change the order.

If the delooping takes place in the object $A^n$, we can also perform the same delooping in $A^{n+1}$. This is because we assume that the isomorphism is a unit multiple of the identity. The delooped cochain complex in the degrees where we will cancel looks like
\[
\bar{A}_{q+1}^n \oplus \bar{A}_{q-1}^n\oplus B^n\,
\begin{tikzpicture}
\draw[->] (-0.5,0) -- (3.5,0);
\node at (1.5,0.8) {$\begin{pmatrix} k & 0 & ({\color{white}0}\,-{\color{white}0}\,) \delta \\ 0 & k & {\color{white}0}\,\,\delta \\ \gamma\,\, {\color{white}0} & \gamma\,\, {\color{white}0} & \varepsilon \end{pmatrix}$};
\bowldot{1.85}{1.3}{0.3}
\bowl{2.45}{1.3}{0.3}
\bowl{2.1}{0.9}{0.3}
\bowlud{0.25}{0.275}{0.3}
\bowluddot{1.125}{0.275}{0.3}
\end{tikzpicture}
\,\bar{A}_{q+1}^{n+1} \oplus \bar{A}_{q-1}^n \oplus B^n
\]
with $k\in\K$ a unit. Gauss elimination results in
\[
 B^n\,
\begin{tikzpicture}
 \draw[->] (0,0) -- (2,0);
\node at (1,0.2) {$\varepsilon - k^{-1} \gamma\delta$};
\end{tikzpicture}
\,B^{n+1}
\]
which is the same result as if we had first cancelled, with no further delooping.

If the delooping takes place in $B^n$, delooping results in
\[
A^n \oplus \bar{B}_{q+1}^n\oplus \bar{B}_{q-1}^n\,
\begin{tikzpicture}
\draw[->] (0,0) -- (3,0);
\node at (1.5,0.525) {$\begin{pmatrix} k & \delta\,\,{\color{white}0} & \delta\,\,{\color{white}0} \\ \gamma & \varepsilon\,\,{\color{white}0} & \varepsilon\,\,{\color{white}0}\end{pmatrix}$};
\bowlud{1.4}{0.275}{0.3}
\bowluddot{2.2}{0.275}{0.3}
\bowlud{1.4}{0.68}{0.3}
\bowluddot{2.2}{0.68}{0.3}
\end{tikzpicture}
\,A^{n+1} \oplus B^{n+1}
\]
and after Gauss elimination we have
\[
\bar{B}_{q+1}^n\oplus \bar{B}_{q-1}^n\,
\begin{tikzpicture}
\draw[->] (0,0) -- (4,0);
\node at (2,0.25) {$(\varepsilon - k^{-1} \gamma\delta)\circ \begin{pmatrix} {\color{white}00} & {\color{white}00} \end{pmatrix}$};
\bowlud{2.55}{0.175}{0.3}
\bowluddot{3.35}{0.175}{0.3}
\end{tikzpicture}
\,B^{n+1}
\]
which is the same as if we first cancelled and then delooped. The final case where we deloop in $B^{n+1}$ is similar.
\end{proof}

\begin{proof}[Proof of Proposition \ref{prop:readoffsinv}]
Notice that each complex $D_j^\ast$ has a filtration, and we can form its own $s$-invariant $s(D_j)$. Then $s(D_1)=s^\F(K)$, and we want to show that $s(D_{j+1})=s(D_j)$ for all $j=1,\ldots,k-1$.

Consider Step 7, and the basis element $c^{0,q}_j$. Since we cancel it with an element $c^{1,q'}_{j'}$, the chain homotopy equivalence $D_{i-1} \to D_i$ induces chain homotopy equivalences $\mathcal{F}_rD_{i-1} \to \mathcal{F}_rD_i$ for all $r \leq q$. For $r>q$ we still have $H^0(\mathcal{F}_rD_i)=H^0(\mathcal{F}_rD_{i-1})$, since no basis element $c^{0,r}_l$ has non-zero coboundary by the choice of $q$.
It follows that Step 7 does not change the $s$-invariant.

In Step 9 we have $H^0(\mathcal{F}_qD_{k-1})$ is a quotient of $D_{k-1}^0$ obtained by disregarding basis elements $c^{0,q'}$ with $q'<q$, and more relations coming from $\delta(c^{-1,r})$ with $r\geq q$. If $c^{0,q}$ is being cancelled with $c^{-1,q'}$ in Step 9, we have that
\[
 \delta (k\cdot c^{-1,q'}) = c^{0,q} -x,
\]
for some $k\in \F$ and $x\in D_{k-1}^0$. By the choice of $q$ being minimal we get $x\in \mathcal{F}_qD^0_{k-1}$, so for $r\leq q$ the image of $H^0(\mathcal{F}_rD_k)$ in $\F^2=H^0(D_k)$ is the same as the image of $H^0(\mathcal{F}_rD_{k-1})$. For $r>q$ the filtrations are unchanged, hence so are the images. So again the $s$-invariant does not change during Step 9.
\end{proof}

\begin{proof}[Proof of Lemma \ref{lem:samesinv}]
Let $C^\ast_i$ be a cochain complex over $\barcob{\F}(\partial T_i')$, and let $D_i^\ast$ be obtained from $C^\ast_i$ by setting $D_i^k = C_i^k$ for $k\in \{-2-n+i,\ldots,1\}$ and $0$ otherwise, and $\delta_{D_i}=\delta_{C_i}$ in the range where this makes sense.

Now form $\tilde{C}^\ast_{i+1}=C_i^\ast \otimes C^\ast(T_{i+1})$ and $\tilde{D}^\ast_{i+1}=D_i^\ast \otimes C^\ast(T_{i+1})$. Since $C^\ast(T_{i+1})$ is concentrated in homological degrees $0$ and $1$, we have $\tilde{C}^k_{i+1}=\tilde{D}^k_{i+1}$ for $k\in \{-1-n+i,\ldots,1\}$, and $\tilde{D}^k_{i+1}$ is a direct summand of $\tilde{C}^k_{i+1}$ for $k\in \{-2-n+i,2\}$. The coboundaries mostly agree, and differ only slightly towards the ends of the complex, compare Figure \ref{fig:complexdiff}.
\begin{figure}[ht]
\begin{tikzpicture}
\node[color=lightgray] at (0,0) {$C^{k-1}_i\otimes B$};
\node[color=lightgray] at (0,0.6) {$\oplus$};
\node at (0,1.2) {$C^k_i\otimes A$};
\node at (2.3,0) {$C^k_i\otimes B$};
\node at (2.3,0.6) {$\oplus$};
\node at (2.3,1.2) {$C^{k+1}_i\otimes A$};
\node at (5.5,0) {$C^{-1}_i\otimes B$};
\node at (5.5,0.6) {$\oplus$};
\node at (5.5,1.2) {$C^0_i\otimes A$};
\node at (7.8,0) {$C^0_i\otimes B$};
\node at (7.8,0.6) {$\oplus$};
\node at (7.8,1.2) {$C^1_i\otimes A$};
\node at (10.1,0) {$C^1_i\otimes B$};
\node[color=lightgray] at (10.2,0.6) {$\oplus$};
\node[color=lightgray] at (10.1,1.2) {$C^2_i\otimes A$};
\draw[color=lightgray, ->] (-1.3,0) -- (-0.9,0);
\draw[color=lightgray, ->] (-1.3,1.2) -- (-0.9,1.2);
\draw[color=lightgray, ->] (-1.3,0.5) -- (-0.9,0.2);
\draw[color=lightgray, ->] (0.9,0) -- (1.6,0);
\draw[->] (0.7,1.2) -- (1.4,1.2);
\draw[->] (0.7,1) -- (1.6,0.2);
\draw[->] (3,0) -- (3.4,0);
\draw[->] (3.1,1.2) -- (3.4,1.2);
\draw[->] (3.1,1) -- (3.4,0.8);
\draw[->] (4.3,0) -- (4.7,0);
\draw[->] (4.3,1.2) -- (4.8,1.2);
\draw[->] (4.3,0.5) -- (4.7,0.2);
\draw[->] (6.3,0) -- (7.1,0);
\draw[->] (6.2,1.2) -- (7.1,1.2);
\draw[->] (6.2,1) -- (7.1,0.2);
\draw[->] (8.5,0) -- (9.4,0);
\draw[color=lightgray, ->] (8.5,1.2) -- (9.4,1.2);
\draw[->] (8.5,1) -- (9.4,0.2);
\draw[color=lightgray, ->] (10.8,0) -- (11.2,0);
\draw[color=lightgray, ->] (10.8,1.2) -- (11.2,1.2);
\draw[color=lightgray, ->] (10.8,1) -- (11.2,0.7);
\node at (3.9,0.6) {$\cdots$};
\end{tikzpicture}
\caption{The cochain complex $\tilde{D}^\ast_{i+1}$, and the cochain complex $\tilde{C}^\ast_{i+1}$, whose extra terms are indicated in gray. We write $C^{\ast}(T_{i+1})$ as $A \to B$.}
\label{fig:complexdiff}
\end{figure}
We can deloop both complexes, calling them $\bar{D}^\ast_{i+1}$ and $\bar{C}^\ast_{i+1}$, but we still get that they agree for $k\in \{-1-n+i,\ldots,1\}$ and behave as in Figure \ref{fig:complexdiff} at the ends.

We now perform Gaussian elimination on $\bar{D}^\ast_{i+1}$ to get a cochain complex $\hat{D}^\ast_{i+1}$, and do the exact same eliminations on $\bar{C}^\ast_{i+1}$ to get a cochain complex $C^\ast_{i+1}$. We could possibly do more Gaussian eliminations on $C^\ast_{i+1}$. But we will not do this, as now $\hat{D}^\ast_{i+1}$ and $C^\ast_{i+1}$ agree in the range between $k\in \{-2-n+(i+1),\ldots,1\}$.

Cutting $\hat{D}^\ast_{i+1}$ by setting the object in degrees $-2-n+i$ and $2$ equal to $0$ gives rise to a cochain complex $D^\ast_{i+1}$ which is obtained from $C^\ast_{i+1}$ in the same way that $D^\ast_i$ was obtained from $C^\ast_i$.

If we begin with $C_1^\ast$ from (\ref{eq:algstart}), the final complex $D^\ast_n$ is exactly the result from using the first five steps of the modified algorithm, and $C^\ast_n$ is a cochain complex chain homotopy equivalent to the Bar-Natan complex. Since the filtrations are preserved and the two complexes agree in degrees $-1,0,1$, we get a commutative diagram
\[
\begin{tikzpicture}
\node at (0,0) {$H^0(\mathcal{F}_q D_n)$};
\node at (2.5,0) {$H^0(D_n)$};
\node at (0,-1) {$H^0(\mathcal{F}_q C_n)$};
\node at (2.5,-1) {$H^0(C_n)$};
\draw[->] (0.9,0) -- (1.8,0);
\draw[->] (0.9,-1) -- (1.8,-1);
\draw[->] (0,-0.25) -- (0,-0.75);
\draw[->] (2.5,-0.25) -- (2.5,-0.75);
\end{tikzpicture}
\]
Notice that the inclusion $D^\ast_n\subset C^\ast_n$ is not a cochain map, but behaves like one in degree $0$, leading to the vertical maps being actual identities. It follows that both complexes lead to the same $s$-invariant, with the $s$-invariant coming from $C^\ast_n$ being the $s$-invariant of the knot.
\end{proof}

\section{Analyzing cancellations in the Bar-Natan complex}
\label{sec:analyze}

Our rule for performing Gauss elimination sounds rather more restrictive than the one in \cite{MR2320156}. But given two objects $S,S'$ in $\mathcal{C}ob^3_{\bullet /l}(B)$ where both manifolds do not contain circle components, there are not too many isomorphism between them. Surgeries are not isomorphisms, nor is the dotting of an arc, and there is no point in considering dotted spheres since we already look at morphisms up to relations.
So the typical isomorphism is coming from the product cobordism, that is, the identity.

We now want to check that we can assume a similar amount of cancellation opportunities on the Bar-Natan complex as on the Khovanov complex. %This argument will not be completely rigorous, but is supported by computer calculations we present in Section \ref{sec:examples}.

Given a surgery between two arcs
\[
\begin{tikzpicture}
\smoothingud{0}{0}{0.5}
\smoothinglr{1.6}{0}{0.5}
\node at (2.5,0) [scale = 0.7] {$\{+1\}$};
\draw[->] (0.65,0.25) -- (1.45,0.25);
\end{tikzpicture}
\]
tensoring with an object from the next crossing might lead to the same surgery between two arcs, or it might lead to at least one circle in one of the objects. If we only get one new circle, it may be on the left or on the right. So we can get
\[
\begin{tikzpicture}
\smoothingud{0}{0}{0.5}
\draw (0.5,0.5) to [out = 45, in = 135] (1,0.5);
\draw (1,0.5) to [out = 315, in = 45] (1,0);
\draw (1,0) to [out = 225, in = 315] (0.5,0);
\smoothinglr{2.2}{0}{0.5}
\draw (2.7,0.5) to [out = 45, in = 135] (3.2,0.5);
\draw (3.2,0.5) to [out = 315, in = 45] (3.2,0);
\draw (3.2,0) to [out = 225, in = 315] (2.7,0);
\draw[->] (1.25,0.25) -- (2.05,0.25);
\node at (3.6,0) [scale = 0.7] {$\{+1\}$};
\node at (4.15,0.25) {or};
\smoothingud{4.5}{0}{0.5}
\draw (5,0.5) to [out = 45, in =315] (5,1);
\draw (5,1) to [out = 135, in = 45] (4.5,1);
\draw (4.5,1) to [out = 225, in = 135] (4.5,0.5);
\smoothinglr{6.1}{0}{0.5}
\draw (6.6,0.5) to [out = 45, in =315] (6.6,1);
\draw (6.6,1) to [out = 135, in = 45] (6.1,1);
\draw (6.1,1) to [out = 225, in = 135] (6.1,0.5);
\node at (7,0) [scale = 0.7] {$\{+1\}$};
\draw[->] (5.15,0.25) -- (5.95,0.25);
\end{tikzpicture}
\]
With delooping, this turns into
\[
\begin{tikzpicture}
\draw (0,0) to [out = 45, in = 315] (0,0.5);
\node at (0.4,0) [scale = 0.7] {$\{-1\}$};
\draw (0,1) to [out = 45, in = 315] (0,1.5);
\node at (0.4,1) [scale = 0.7] {$\{+1\}$};
\draw (2,0.5) to [out = 45, in = 315] (2,1);
\node at (2.4,0.5) [scale = 0.7] {$\{+1\}$};
\draw (1.1,0.2) to [out = 45, in = 315] (1.1,0.4);
\node at (1.14,0.3) [scale = 0.5] {$\bullet$};
\draw[->] (0.5,0.25) -- (1.8,0.65);
\draw[->] (0.5,1.25) -- node [above, scale = 0.7] {$\id$} (1.8, 0.85);
\node at (3,0.75) {or};
\draw (3.5,0.5) to [out = 45, in = 135] (4,0.5);
\draw (5.5,1) to [out = 45, in = 135] (6,1);
\draw (5.5,0) to [out = 45, in = 135] (6,0);
\node at (6.4,1) [scale = 0.7] {$\{+2\}$};
\draw (4.375,1.15) to [out = 45, in = 135] (4.575,1.15);
\node at (4.475,1.18) [scale = 0.5] {$\bullet$};
\draw[->] (4.2,0.85) -- node [above, scale = 0.7] {$\,-I$} (5.3,1.25);
\draw[->] (4.2,0.65) -- node [below, scale = 0.7] {$\id$} (5.3,0.25);
\end{tikzpicture}
\]
Recall that $I$ is multiplication by a twice-dotted sphere. In both cases we will be able to perform one Gaussian elimination, and this is exactly what we would get if we were working on the Khovanov complex. Also, notice that
\[
\begin{tikzpicture}
\draw (0,0) to [out = 45, in = 315] (0,0.5);
\draw (1.5,0) to [out = 45, in = 315] (1.5,0.5);
\node at (1.9,0) [scale = 0.7] {$\{+2\}$};
\draw[->] (0.4,0.25) -- (1.3,0.25);
\draw (0.8,0.3) to [out = 45, in = 315] (0.8,0.5);
\node at (0.84,0.4) [scale = 0.5] {$\bullet$};
\node at (2.7,0.25) {and};
\draw (3.2,0) to [out = 45, in = 315] (3.2,0.5);
\draw (4.7,0) to [out = 45, in = 315] (4.7,0.5);
\node at (5.1,0) [scale = 0.7] {$\{+2\}$};
\draw[->] (3.6,0.25) -- node [above, scale = 0.7] {$I$} (4.5,0.25);
\end{tikzpicture}
\]
turn into
\[
\begin{tikzpicture}
\node at (0,0) {$\emptyset \{-1\}$};
\node at (0,1) {$\emptyset \{+1\}$};
\node at (3,0) {$\emptyset \{+1\}$};
\node at (3,1) {$\emptyset \{+3\}$};
\draw[->] (0.6,0) -- node [below, scale = 0.7] {$I$} (2.4,0);
\draw[->] (0.6,1) -- node [above, scale = 0.7] {${\color{white}00}-{\color{white}00}=0$} (2.4,1);
\draw[->] (0.6,0.9) -- node [above, sloped, scale = 0.7] {$\id$} (2.4,0.1);
\spheredots{1}{1.175}{0.1}
\spheredots{1.55}{1.175}{0.1}
\node at (4,0.5) {and};
\node at (5,0) {$\emptyset \{-1\}$};
\node at (5,1) {$\emptyset \{+1\}$};
\node at (8,0) {$\emptyset \{+1\}$};
\node at (8,1) {$\emptyset \{+3\}$};
\draw[->] (5.6,0) -- node [below, scale = 0.7] {$I$} (7.4,0);
\draw[->] (5.6,1) -- node [above, scale = 0.7] {$I$} (7.4,1);
\draw[->] (5.6,0.9) -- node [above, sloped, scale = 0.7] {$0$} (7.4,0.1);
\end{tikzpicture}
\]
On the left hand side we can perform one Gaussian elimination in the same way as this is possible in the Khovanov complex. The right hand side does not occur in the Khovanov situation, and we cannot perform Gaussian elimination, as the morphism does not preserve the quantum degree.

From this we expect a similar amount of cancellations on the Bar-Natan complex as on the Khovanov complex. Because of the extra morphisms occuring it may be possible that we do not get the same amount of opportunities to cancel. Furthermore, since we need to keep track of more morphisms, we expect this to be less efficient on the whole of the Bar-Natan complex. 
But our calculations suggest that disregarding generators of unnecessary homological degree more than make up for this shortcoming. 

If we stack crossings as in (\ref{eq:pandm}) on top of each other, we can do cancellations directly. It is not clear that this improves efficiency, but it can be combined with \cite{2017arXiv171001857L} to make the $\Sq^2$-refinement below more calculation friendly. Let us define the tangles
\[
\begin{tikzpicture}
\squarenumber{0}{0}{0}{1}{0.9}
\node at (0.9,0.3) {$=$};
\poscrossing{1.2}{0.05}{0.5}
\squarenumber{2.4}{0}{0}{-1}{0.9}
\node at (3.3,0.3) {$=$};
\negcrossing{3.6}{0.05}{0.5}
\squarenumber{4.9}{0}{0.2}{n+1}{0.9}
\node at (6,0.3) {$=$};
\squarenumber{6.3}{-0.2}{0}{n}{0.9}
\poscrossing{6.4}{0.5}{0.4}
\squarenumber{7.8}{0}{0.25}{-n-1}{0.9}
\node at (8.95,0.3) {$=$};
\squarenumber{9.25}{0.2}{0}{-n}{0.9}
\negcrossing{9.35}{-0.3}{0.4}
\end{tikzpicture}
\]
for $n\geq 1$. 

\begin{proposition}\label{prop:glueddiags}
Let $n\geq 1$. We have 
\begin{enumerate}
\item $C^{\ast,\ast}(\,
\begin{tikzpicture}[baseline = 2pt, scale = 0.5]
\squarenumber{0}{0}{0}{2n}{0.6}
\end{tikzpicture}
\,)$ is chain homotopy equivalent to
\[
\begin{tikzpicture}
\smoothingud{-0.6}{0}{0.6}
\smoothinglr{1}{0}{0.6}
\node at (1.85,0) [scale = 0.5] {$\{+1\}$};
\smoothinglr{2.04}{0.375}{0.15}
\smoothinglr{2.4}{0.375}{0.15}
\node at (2.115,0.485) [scale = 0.4] {$\bullet$};
\node at (2.475,0.4) [scale = 0.4] {$\bullet$};
\smoothinglr{3}{0}{0.6}
\node at (3.85,0) [scale = 0.5] {$\{+3\}$};
\smoothinglr{3.9}{0.365}{0.15}
\smoothinglr{4.25}{0.365}{0.15}
\node at (3.975,0.475) [scale = 0.4] {$\bullet$};
\node at (4.325,0.385) [scale = 0.4] {$\bullet$};
\smoothinglr{5}{0}{0.6}
\node at (5.85,0) [scale = 0.5] {$\{+5\}$};
\smoothinglr{6.04}{0.375}{0.15}
\smoothinglr{6.4}{0.375}{0.15}
\node at (6.115,0.485) [scale = 0.4] {$\bullet$};
\node at (6.475,0.4) [scale = 0.4] {$\bullet$};
\node at (7.3,0.3) {$\cdots$};
\smoothinglr{8.04}{0.375}{0.15}
\smoothinglr{8.4}{0.375}{0.15}
\node at (8.115,0.485) [scale = 0.4] {$\bullet$};
\node at (8.475,0.4) [scale = 0.4] {$\bullet$};
\smoothinglr{9}{0}{0.6}
\node at (10,0) [scale = 0.5] {$\{4n+1\}$};
\draw[->] (0.2,0.3) -- (0.8,0.3);
\draw[->] (1.8,0.3) -- node [above, scale = 0.6] {$-$} (2.8,0.3);
\draw[->] (3.8,0.3) -- node [above, scale = 0.5] {$\,\,\,\,\,\, + \,\,\,\,\,\, - I$} (4.8,0.3);
\draw[->] (5.8,0.3) -- node [above, scale = 0.6] {$-$} (6.8,0.3);
\draw[->] (7.8,0.3) -- node [above, scale = 0.6] {$-$} (8.8,0.3);
\end{tikzpicture}
\]
\item $C^{\ast,\ast}(\,
\begin{tikzpicture}[baseline = 2pt, scale = 0.5]
\squarenumber{0}{0}{0.25}{2n+1}{0.5}
\end{tikzpicture}
\,)$ is chain homotopy equivalent to
\[
\begin{tikzpicture}
\smoothingud{-0.6}{0}{0.6}
\smoothinglr{1}{0}{0.6}
\node at (1.85,0) [scale = 0.5] {$\{+1\}$};
\smoothinglr{2.04}{0.375}{0.15}
\smoothinglr{2.4}{0.375}{0.15}
\node at (2.115,0.485) [scale = 0.4] {$\bullet$};
\node at (2.475,0.4) [scale = 0.4] {$\bullet$};
\smoothinglr{3}{0}{0.6}
\node at (3.85,0) [scale = 0.5] {$\{+3\}$};
\smoothinglr{3.9}{0.365}{0.15}
\smoothinglr{4.25}{0.365}{0.15}
\node at (3.975,0.475) [scale = 0.4] {$\bullet$};
\node at (4.325,0.385) [scale = 0.4] {$\bullet$};
\smoothinglr{5}{0}{0.6}
\node at (5.85,0) [scale = 0.5] {$\{+5\}$};
\smoothinglr{6.04}{0.375}{0.15}
\smoothinglr{6.4}{0.375}{0.15}
\node at (6.115,0.485) [scale = 0.4] {$\bullet$};
\node at (6.475,0.4) [scale = 0.4] {$\bullet$};
\node at (7.3,0.3) {$\cdots$};
\smoothinglr{7.9}{0.365}{0.15}
\smoothinglr{8.25}{0.365}{0.15}
\node at (7.975,0.475) [scale = 0.4] {$\bullet$};
\node at (8.325,0.385) [scale = 0.4] {$\bullet$};
\smoothinglr{9}{0}{0.6}
\node at (10,0) [scale = 0.5] {$\{4n+3\}$};
\draw[->] (0.2,0.3) -- (0.8,0.3);
\draw[->] (1.8,0.3) -- node [above, scale = 0.6] {$-$} (2.8,0.3);
\draw[->] (3.8,0.3) -- node [above, scale = 0.5] {$\,\,\,\,\,\, + \,\,\,\,\,\, - I$} (4.8,0.3);
\draw[->] (5.8,0.3) -- node [above, scale = 0.6] {$-$} (6.8,0.3);
\draw[->] (7.8,0.3) -- node [above, scale = 0.5] {$\,\,\,\,\,\, + \,\,\,\,\,\, - I$} (8.8,0.3);
\end{tikzpicture}
\]
\item $C^{\ast,\ast}(\,
\begin{tikzpicture}[baseline = 2pt, scale = 0.5]
\squarenumber{0}{0}{0.15}{-2n}{0.6}
\end{tikzpicture}
\,)$ is chain homotopy equivalent to
\[
\begin{tikzpicture}
\smoothinglr{0}{0}{0.6}
\node at (1.05,0) [scale = 0.5] {$\{-2n+1\}$};
\smoothinglr{1.04}{0.375}{0.15}
\smoothinglr{1.4}{0.375}{0.15}
\node at (1.115,0.485) [scale = 0.4] {$\bullet$};
\node at (1.475,0.4) [scale = 0.4] {$\bullet$};
\node at (2.3,0.3) {$\cdots$};
\smoothinglr{3.04}{0.375}{0.15}
\smoothinglr{3.4}{0.375}{0.15}
\node at (3.115,0.485) [scale = 0.4] {$\bullet$};
\node at (3.475,0.4) [scale = 0.4] {$\bullet$};
\smoothinglr{4}{0}{0.6}
\node at (5.05,0) [scale = 0.5] {$\{+2n-5\}$};
\smoothinglr{4.9}{0.365}{0.15}
\smoothinglr{5.25}{0.365}{0.15}
\node at (4.975,0.475) [scale = 0.4] {$\bullet$};
\node at (5.325,0.385) [scale = 0.4] {$\bullet$};
\smoothinglr{6}{0}{0.6}
\node at (7.05,0) [scale = 0.5] {$\{+2n-3\}$};
\smoothinglr{7.04}{0.375}{0.15}
\smoothinglr{7.4}{0.375}{0.15}
\node at (7.115,0.485) [scale = 0.4] {$\bullet$};
\node at (7.475,0.4) [scale = 0.4] {$\bullet$};
\smoothinglr{8}{0}{0.6}
\node at (9.05,0) [scale = 0.5] {$\{+2n-1\}$};
\smoothingud{9.6}{0}{0.6}
\draw[->] (0.8,0.3) -- node [above, scale = 0.6] {$-$} (1.8,0.3);
\draw[->] (2.8,0.3) -- node [above, scale = 0.6] {$-$} (3.8,0.3);
\draw[->] (4.8,0.3) -- node [above, scale = 0.5] {$\,\,\,\,\,\, + \,\,\,\,\,\, - I$} (5.8,0.3);
\draw[->] (6.8,0.3) -- node [above, scale = 0.6] {$-$} (7.8,0.3);
\draw[->] (8.8,0.3) -- (9.4,0.3);
\node at (10.5,0) [scale = 0.5] {$\{+2n\}$};
\end{tikzpicture}
\]
\item $C^{\ast,\ast}(\,
\begin{tikzpicture}[baseline = 2pt, scale = 0.5]
\squarenumber{0}{0}{0.35}{-2n-1}{0.5}
\end{tikzpicture}
\,)$ is chain homotopy equivalent to
\[
\begin{tikzpicture}
\smoothinglr{0}{0}{0.6}
\node at (0.9,0) [scale = 0.5] {$\{-2n\}$};
\smoothinglr{0.9}{0.365}{0.15}
\smoothinglr{1.25}{0.365}{0.15}
\node at (0.975,0.475) [scale = 0.4] {$\bullet$};
\node at (1.325,0.385) [scale = 0.4] {$\bullet$};
\node at (2.3,0.3) {$\cdots$};
\smoothinglr{3.04}{0.375}{0.15}
\smoothinglr{3.4}{0.375}{0.15}
\node at (3.115,0.485) [scale = 0.4] {$\bullet$};
\node at (3.475,0.4) [scale = 0.4] {$\bullet$};
\smoothinglr{4}{0}{0.6}
\node at (5.05,0) [scale = 0.5] {$\{+2n-4\}$};
\smoothinglr{4.9}{0.365}{0.15}
\smoothinglr{5.25}{0.365}{0.15}
\node at (4.975,0.475) [scale = 0.4] {$\bullet$};
\node at (5.325,0.385) [scale = 0.4] {$\bullet$};
\smoothinglr{6}{0}{0.6}
\node at (7.05,0) [scale = 0.5] {$\{+2n-2\}$};
\smoothinglr{7.04}{0.375}{0.15}
\smoothinglr{7.4}{0.375}{0.15}
\node at (7.115,0.485) [scale = 0.4] {$\bullet$};
\node at (7.475,0.4) [scale = 0.4] {$\bullet$};
\smoothinglr{8}{0}{0.6}
\node at (8.9,0) [scale = 0.5] {$\{+2n\}$};
\smoothingud{9.6}{0}{0.6}
\draw[->] (0.8,0.3) -- node [above, scale = 0.5] {$\,\,\,\,\,\, + \,\,\,\,\,\, - I$} (1.8,0.3);
\draw[->] (2.8,0.3) -- node [above, scale = 0.6] {$-$} (3.8,0.3);
\draw[->] (4.8,0.3) -- node [above, scale = 0.5] {$\,\,\,\,\,\, + \,\,\,\,\,\, - I$} (5.8,0.3);
\draw[->] (6.8,0.3) -- node [above, scale = 0.6] {$-$} (7.8,0.3);
\draw[->] (8.8,0.3) -- (9.4,0.3);
\node at (10.65,0) [scale = 0.5] {$\{+2n+1\}$};
\end{tikzpicture}
\]
\end{enumerate}
\end{proposition}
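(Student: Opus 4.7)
The plan is to prove the four parts by induction on $n$, with each inductive step reducing to applications of Lemma \ref{lem:deloop} (delooping) followed by Lemma \ref{lem:gausselim} (Gaussian elimination). The base case for part (1) is $n=1$ (two stacked positive crossings), and the inductive step adds one crossing at a time, so that parts (1) and (2) are established together by the same induction; parts (3) and (4) follow from an analogous argument with $M$ in place of $P$.

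For the base case of (1), I would compute $C^{\ast,\ast}(P)\otimes C^{\ast,\ast}(P)$ directly. The only bi-resolution that produces a closed circle is the one where both crossings take their $1$-resolution, since the two horizontal smoothings stack to form a loop along the shared middle arcs. Delooping via Lemma \ref{lem:deloop} splits this top-degree object into two summands shifted by $\{+1\}$ and $\{-1\}$; one of the two middle-degree terms then cancels with the $\{-1\}$-shifted summand along an identity component of the differential, and the explicit formula $\varepsilon - \gamma\varphi^{-1}\delta$ from Lemma \ref{lem:gausselim} yields the surviving arrows with the predicted labels $-$ and $+\,-I$.

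For the inductive step, tensor the reduced complex for $m$ crossings with one more $C^{\ast,\ast}(P)$. Each horizontal-smoothing generator $C^j$ tensored with $P^1$ creates a new circle (two stacked horizontal smoothings share a loop), while tensoring with $P^0$ creates none. Deloop all the new circles. At each intermediate homological degree the tensor complex has three summands of horizontal-smoothing type with various quantum shifts, and applying Gaussian elimination along the identity arrows that arise from the saddle maps in the $P$ factor cancels two of these three, leaving exactly one. A parity count shows that this simultaneously advances (1) at parameter $n$ to (2) at parameter $n$, and (2) at parameter $n$ to (1) at parameter $n+1$; the surviving differentials acquire the alternating labels $-$ and $+\,-I$ as claimed. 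Parts (3) and (4) go through identically with $M$ in place of $P$: since $M$ is $P$ rotated by $90^\circ$, its $0$- and $1$-resolutions play the opposite topological roles to those of $P$, so the new circles form at the left end of the complex rather than the right, prepending new terms with decreasing quantum shifts.

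The main obstacle I foresee is the bookkeeping of signs and $I$-factors through the Gaussian-elimination formula. The delooping isomorphism of Lemma \ref{lem:deloop} involves a $-I$ correction coming from relation (\ref{eq:cylrel}); this correction is exactly what produces the hybrid $+\,-I$ labels in the statement, and one must verify that it propagates correctly through each elimination without unexpected sign flips or cancellations. Once the base case of two crossings is worked out in full detail, the inductive step reduces to essentially the same local computation at the newly created rightmost (or leftmost) circle, so the bulk of the effort is concentrated there.
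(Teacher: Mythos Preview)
Your proposal is correct and follows essentially the same approach as the paper: induction by tensoring with one more crossing, delooping the new circles that arise when two horizontal smoothings are stacked, and then Gaussian eliminating along the resulting identity components. The paper takes the single-crossing complex as its base case (so your $n=1$ computation is really its first induction step) and, like you, only writes out one of the four very similar inductive transitions in detail.
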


\begin{proof}
The proof is done by induction, with the start given by the ordinary surgery complex. We have to do four induction steps, but as they are all very similar, we only show how (2) is derived from (1).

We have $C^{\ast,\ast}(\,
\begin{tikzpicture}[baseline = 2pt, scale = 0.5]
\squarenumber{0}{0}{0.25}{2n+1}{0.5}
\end{tikzpicture}
\,) = C^{\ast,\ast}(\,
\begin{tikzpicture}[baseline = 2pt, scale = 0.5]
\squarenumber{0}{0}{0}{2n}{0.6}
\end{tikzpicture}
\,) \otimes C^{\ast,\ast}(\,
\begin{tikzpicture}[baseline = 2pt, scale = 0.5]
\squarenumber{0}{0}{0}{1}{0.6}
\end{tikzpicture}
\,)$, so by induction this is chain homotopy equivalent to
\[
\begin{tikzpicture}
\smoothingud{-0.6}{0}{0.6}
\smoothinglr{1}{0}{0.6}
\node at (1.85,0) [scale = 0.5] {$\{+1\}$};
\smoothinglr{2.04}{0.375}{0.15}
\smoothinglr{2.4}{0.375}{0.15}
\node at (2.115,0.485) [scale = 0.4] {$\bullet$};
\node at (2.475,0.4) [scale = 0.4] {$\bullet$};
\smoothinglr{5}{0}{0.6}
\node at (6.05,0) [scale = 0.5] {$\{+4n-1\}$};
\smoothinglr{6.04}{0.375}{0.15}
\smoothinglr{6.4}{0.375}{0.15}
\node at (6.115,0.485) [scale = 0.4] {$\bullet$};
\node at (6.475,0.4) [scale = 0.4] {$\bullet$};
\node at (3.3,0.3) {$\cdots$};
\smoothinglr{3.9}{0.365}{0.15}
\smoothinglr{4.25}{0.365}{0.15}
\node at (3.975,0.475) [scale = 0.4] {$\bullet$};
\node at (4.325,0.385) [scale = 0.4] {$\bullet$};
\smoothinglr{7}{0}{0.6}
\node at (8.05,0) [scale = 0.5] {$\{+4n+1\}$};
\smoothinglr{1}{-1.4}{0.6}
\node at (1.85,-1.4) [scale = 0.5] {$\{+1\}$};
\smoothinglr{3}{-1.4}{0.6}
\draw (3.3,-1.1) circle (0.1);
\node at (3.85,-1.4) [scale = 0.5] {$\{+2\}$};
\draw (4.04,-1.045) to [out = 45, in = 135] (4.19,-1.045);
\draw (4.115,-0.95) circle (0.035);
\draw (4.04,-0.855) to [out = 315, in = 225] (4.19,-0.855);
\draw (4.4,-1.045) to [out = 45, in = 135] (4.55,-1.045);
\draw (4.475,-0.95) circle (0.035);
\draw (4.4,-0.855) to [out = 315, in = 225] (4.55,-0.855);
\node at (4.15,-0.95) [scale = 0.4] {$\bullet$};
\node at (4.475,-1.02) [scale = 0.4] {$\bullet$};
\node at (5.3,-1.1) {$\cdots$};
\draw (5.9,-1.065) to [out = 45, in = 135] (6.05,-1.065);
\draw (5.975,-0.97) circle (0.035);
\draw (5.9,-0.875) to [out = 315, in = 225] (6.05,-0.875);
\draw (6.25,-1.065) to [out = 45, in = 135] (6.4,-1.065);
\draw (6.325,-0.97) circle (0.035);
\draw (6.25,-0.875) to [out = 315, in = 225] (6.4,-0.875);
\node at (6.01,-0.97) [scale = 0.4] {$\bullet$};
\node at (6.325,-1.04) [scale = 0.4] {$\bullet$};
\smoothinglr{7}{-1.4}{0.6}
\draw (7.3,-1.1) circle (0.1);
\node at (7.95,-1.4) [scale = 0.5] {$\{+4n\}$};
\draw (8.04,-1.045) to [out = 45, in = 135] (8.19,-1.045);
\draw (8.115,-0.95) circle (0.035);
\draw (8.04,-0.855) to [out = 315, in = 225] (8.19,-0.855);
\draw (8.4,-1.045) to [out = 45, in = 135] (8.55,-1.045);
\draw (8.475,-0.95) circle (0.035);
\draw (8.4,-0.855) to [out = 315, in = 225] (8.55,-0.855);
\node at (8.15,-0.95) [scale = 0.4] {$\bullet$};
\node at (8.475,-1.02) [scale = 0.4] {$\bullet$};
\smoothinglr{9}{-1.4}{0.6}
\draw (9.3,-1.1) circle (0.1);
\node at (10.05,-1.4) [scale = 0.5] {$\{+4n+2\}$};
\draw[->] (0.2,0.3) -- (0.8,0.3);
\draw[->] (1.8,0.3) -- node [above, scale = 0.6] {$-$} (2.8,0.3);
\draw[->] (3.8,0.3) -- node [above, scale = 0.5] {$\,\,\,\,\,\, + \,\,\,\,\,\, - I$} (4.8,0.3);
\draw[->] (5.8,0.3) -- node [above, scale = 0.6] {$-$} (6.8,0.3);
\draw[->] (-0.2,-0.2) -- node [above, scale = 0.6] {$+$} (0.8,-0.9);
\draw[->] (1.8,-0.2) -- node [above, scale = 0.6] {$-$} (2.8,-0.9);
\draw[->] (5.8,-0.2) -- node [above, scale = 0.6] {$-$}(6.8,-0.9);
\draw[->] (7.8,-0.2) -- node [above, scale = 0.6] {$+$}(8.8,-0.9);
\draw[->] (1.8,-1.1) -- (2.8,-1.1);
\draw[->] (3.8,-1.1) -- node [above, scale = 0.6] {$-$} (4.8,-1.1);
\draw[->] (5.8,-1.1) -- node [above, scale = 0.5] {$\,\,\,\,\,\, + \,\,\,\,\,\, - I$} (6.8,-1.1);
\draw[->] (7.8,-1.1) -- node [above, scale = 0.6] {$-$} (8.8,-1.1);
\end{tikzpicture}
\]
After delooping we get
\[
\begin{tikzpicture}
\smoothingud{-0.6}{0}{0.6}
\smoothinglr{1}{0}{0.6}
\node at (1.85,0) [scale = 0.5] {$\{+1\}$};
\smoothinglr{2.04}{0.375}{0.15}
\smoothinglr{2.4}{0.375}{0.15}
\node at (2.115,0.485) [scale = 0.4] {$\bullet$};
\node at (2.475,0.4) [scale = 0.4] {$\bullet$};
\smoothinglr{5}{0}{0.6}
\node at (6.05,0) [scale = 0.5] {$\{+4n-1\}$};
\smoothinglr{6.04}{0.375}{0.15}
\smoothinglr{6.4}{0.375}{0.15}
\node at (6.115,0.485) [scale = 0.4] {$\bullet$};
\node at (6.475,0.4) [scale = 0.4] {$\bullet$};
\node at (3.3,0.3) {$\cdots$};
\smoothinglr{3.9}{0.365}{0.15}
\smoothinglr{4.25}{0.365}{0.15}
\node at (3.975,0.475) [scale = 0.4] {$\bullet$};
\node at (4.325,0.385) [scale = 0.4] {$\bullet$};
\smoothinglr{7}{0}{0.6}
\node at (8.05,0) [scale = 0.5] {$\{+4n+1\}$};
\smoothinglr{1}{-1.4}{0.6}
\node at (1.85,-1.4) [scale = 0.5] {$\{+1\}$};
\smoothinglr{3}{-1.4}{0.6}
\node at (3.85,-1.4) [scale = 0.5] {$\{+3\}$};
\node at (5.3,-1.1) {$\cdots$};
\smoothinglr{7}{-1.4}{0.6}
\node at (8.05,-1.4) [scale = 0.5] {$\{+4n+1\}$};
\smoothinglr{8.15}{-0.4}{0.15}
\smoothinglr{8.35}{-1.025}{0.15}
\node at (8.225,-0.29) [scale = 0.4] {$\bullet$};
\node at (8.425,-1) [scale = 0.4] {$\bullet$};
\smoothinglr{9}{-1.4}{0.6}
\node at (10.05,-1.4) [scale = 0.5] {$\{+4n+3\}$};
\smoothinglr{3}{-2.8}{0.6}
\node at (3.85,-2.8) [scale = 0.5] {$\{+1\}$};
\node at (5.3,-2.8) {$\cdots$};
\smoothinglr{7}{-2.8}{0.6}
\node at (8.05,-2.8) [scale = 0.5] {$\{+4n-1\}$};
\smoothinglr{9}{-2.8}{0.6}
\node at (10.05,-2.8) [scale = 0.5] {$\{+4n+1\}$};
\draw[->] (0.2,0.3) -- (0.8,0.3);
\draw[->] (1.8,0.3) -- node [above, scale = 0.6] {$-$} (2.8,0.3);
\draw[->] (3.8,0.3) -- node [above, scale = 0.5] {$\,\,\,\,\,\, + \,\,\,\,\,\, - I$} (4.8,0.3);
\draw[->] (5.8,0.3) -- node [above, scale = 0.6] {$-$} (6.8,0.3);
\draw[->] (-0.2,-0.2) -- (0.8,-0.9);
\draw[->] (1.8,-0.2) -- (2.8,-0.9);
\draw[->] (5.8,-0.2) -- (6.8,-0.9);
\draw[->] (7.8,-0.2) -- node [above, sloped, scale = 0.5] {$\,\,\,\,\,\, - I$} (8.8,-0.9);
\draw[->] (1.8,-0.3) -- (2.8,-2.2);
\draw[->] (5.8,-0.3) -- (6.8,-2.2);
\draw[->] (7.8,-0.3) -- node [above, sloped, near end, scale = 0.5] {$\id$} (8.8,-2.2);
\draw[->] (1.8,-1.1) -- (2.8,-1.1);
\draw[->] (3.8,-1.1) -- (4.8,-1.1);
\draw[->] (5.8,-1.1) -- (6.8,-1.1);
\draw[->] (7.8,-1.1) -- node [above, scale = 0.6] {$0-\,\,\,\,\,\,$} (8.8,-1.1);
\draw[->] (1.8,-1.6) -- node [above, sloped, scale = 0.5] {$\id$} (2.8,-2.3);
\draw[->] (3.8,-1.6) -- node [above, sloped, scale = 0.5] {$\id-0$} (4.8,-2.3);
\draw[->] (5.8,-1.6) -- node [above, sloped, scale = 0.5] {$\id+0-0$} (6.8,-2.3);
\draw[->] (7.8,-1.6) -- node [above, sloped, scale = 0.5] {$\id-0$} (8.8,-2.3);
\draw[->] (3.8,-2.5) -- (4.8,-2.5);
\draw[->] (5.8,-2.5) -- (6.8,-2.5);
\draw[->] (7.8,-2.5) -- (8.8,-2.5);
\end{tikzpicture}
\]
We can now cancel the elements in the third row with the corresponding element in the middle row, starting from the left. The top row stays as it is, with the last object of the middle row becoming the object of top homological degree. By Gaussian elimination the coboundary is exactly as predicted.
\end{proof}

\section{The Lipshitz-Sarkar refinements of the $s$-invariant}

In \cite{MR3230817} Lipshitz and Sarkar construct a suspension spectrum for a link whose reduced singular cohomology agrees with Khovanov cohomology of this link, which gives rise to stable cohomology operations on Khovanov cohomology. Furthermore, in \cite{MR3189434} they use such cohomology operations to refine the $s$-invariant, leading to new lower bounds for the slice genus of a knot.

To describe these refinements, assume that $\alpha\colon \tilde{H}^\ast(\cdot;\F) \to \tilde{H}^{\ast+n_\alpha}(\cdot; \F)$ is a stable cohomology operation on singular cohomology. If $K$ is a knot, denote by $\X^q_K$ the suspension spectrum defined in \cite{MR3230817} such that $\tilde{H}^i(\X^q_K;\F) = \Kh^{i,q}(K;\F)$.

We thus have a cohomology operation $\alpha\colon \Kh^{i,q}(K;\F)\to \Kh^{i+n_\alpha,q}(K,\F)$ for every $q$. Also, recall that $\Kh^{\ast,q}(K;\F)\cong H^\ast(\mathcal{F}_q/\mathcal{F}_{q+2};\F)$, where we now write simply $\mathcal{F}_q$ for the filtration on the Bar-Natan complex. Therefore we have a zig-zag of maps
\[
 \Kh^{-n_\alpha,q}(K;\F) \stackrel{\alpha}{\longrightarrow} \Kh^{0,q}(K;\F) \longleftarrow H^0(\mathcal{F}_q;\F) \longrightarrow \BN^0(K;\F)\cong \F^2.
\]
Consider the following configurations related to this zig-zag:
\begin{equation}\label{eq:fullconfigs}
\begin{tikzpicture}[baseline={([yshift=-.5ex]current bounding box.center)}]
\node at (0,0) {$\langle \tilde{a},\tilde{b}\rangle$};
\node at (3,0) {$\langle \hat{a}, \hat{b} \rangle$};
\node at (6,0) {$\langle a,b\rangle$};
\node at (9,0) {$\langle \bar{a},\bar{b}\rangle$};
\node at (0,1) {$\Kh^{-n_\alpha,q}(K;\F)$};
\node at (3,1) {$\Kh^{0,q}(K;\F)$};
\node at (6,1) {$H^0(\mathcal{F}_q;\F)$};
\node at (9,1) {$\BN^0(K;\F)$};
\node at (0,2) {$\langle \tilde{a}\rangle$};
\node at (3,2) {$\langle \hat{a}\rangle$};
\node at (6,2) {$\langle a\rangle$};
\node at (9,2) {$\langle \bar{a}\rangle\not=0$};
\draw[right hook->] (0,0.25) -- (0,0.75);
\draw[right hook->] (3,0.25) -- (3,0.75);
\draw[right hook->] (6,0.25) -- (6,0.75);
\draw[-] (9.05,0.25) -- (9.05,0.75);
\draw[-] (8.95,0.25) -- (8.95,0.75);
\draw[right hook->] (0,1.75) -- (0,1.25);
\draw[right hook->] (3,1.75) -- (3,1.25);
\draw[right hook->] (6,1.75) -- (6,1.25);
\draw[right hook->] (9,1.75) -- (9,1.25);
\draw[->] (0.5,0) -- (2.5,0);
\draw[<-] (3.5,0) -- (5.5,0);
\draw[->] (6.5,0) -- (8.5,0);
\draw[->] (0.4,2) -- (2.6,2);
\draw[<-] (3.4,2) -- (5.6,2);
\draw[->] (6.4,2) -- (8.3,2);
\draw[->] (1.1,1) -- node[above] {$\alpha$} (2.1,1);
\draw[<-] (3.95,1) -- node[above] {$p$} (5.15,1);
\draw[->] (6.85,1) -- node[above] {$i^\ast$} (8.1,1);
\end{tikzpicture}
\end{equation}

\begin{definition}
Call an odd integer $q$ \em $\alpha$-half-full\em, if there exist $a\in H^0(\mathcal{F}_q;\F)$ and $\tilde{a}\in Kh^{-n_\alpha,q}(K;\F)$ such that $p(a)=\alpha(\tilde{a})$, and such that $i^\ast(a)=\bar{a}\not=0$. That is, there exists a configuration as in the upper two rows of (\ref{eq:fullconfigs}).

Call an odd integer $q$ \em $\alpha$-full\em, if there exist $a,b\in H^0(\mathcal{F}_q;\F)$ and $\tilde{a},\tilde{b}\in Kh^{-n_\alpha,q}(K;\F)$ such that $p(a)=\alpha(\tilde{a})$, $p(b)=\alpha(\tilde{b})$, and $i^\ast(a),i^\ast(b)$ generate $\BN^0(K;\F)$. That is, there exists a configuration as in the lower two rows of (\ref{eq:fullconfigs}).
\end{definition}

We note that while $i^\ast(a)$ and $i^\ast(b)$ have to be non-zero, it is allowed that $p(a)$ or $p(b)$ are zero.

With the concept of $\alpha$-full and $\alpha$-half-full, Lipshitz and Sarkar \cite{MR3189434} define their refinements of the $s$-invariant as
\begin{definition}
Let $K$ be a knot and $\alpha$ a stable cohomology operation on singular cohomology with coefficients in $\F$, then $r^\alpha_+,r^\alpha_-, s^\alpha_+, s^\alpha_-\in \Z$ are defined as follows.
\begin{align*}
r^\alpha_+(K) &= \max\{q\in 2\Z+1\,|\, q \mbox{ is $\alpha$-half-full}\}+1\\
s^\alpha_+(K) &= \max\{q\in 2\Z+1\,|\, q \mbox{ is $\alpha$-full}\}+3.
\end{align*}
If $\overline{K}$ denotes the mirror of $K$, we also set
\begin{align*}
r^\alpha_-(K) &= -r^\alpha_+(\overline{K})\\
s^\alpha_-(K) &= -s^\alpha_+(\overline{K}).
\end{align*}
We can put them all together and write
\[
 s^\alpha(K) = (r^\alpha_+(K),s^\alpha_+(K),r^\alpha_-(K),s^\alpha_-(K)).
\]
\end{definition}

The motivation for these refinements is that each of these numbers provides a lower bound for the slice genus of $K$, provided these numbers evaluate the unknot to $0$ (which is automatically satisfied for $n_\alpha>0$), and that in some cases they provide better bounds than the $s$-invariant, see \cite[Thm.1,Thm.2]{MR3189434}.

It is shown in \cite[Lm.4.2]{MR3189434} that each of these new invariants differs from $s^\F(K)$ by at most $2$. Indeed, once $i^\ast\colon H^0(\mathcal{F}_q;\F) \to \BN^0(K;\F)$ is the $0$-map, $q$ can no longer be $\alpha$-half-full, and if $i^\ast$ is not surjective, $q$ cannot be $\alpha$-full. Hence
\[
 r^\alpha_+(K),s^\alpha_+(K) \leq s^\F(K)+2.
\]
Also,
\[
 r^\alpha_+(K),s^\alpha_+(K) \geq s^\F(K),
\]
as for small values of $q$ the desired configurations can be achieved using $0$ for $\tilde{a}$ and $\tilde{b}$. So to calculate $r^\alpha_+(K)$ one has to check whether $s^\F(K)+1$ is $\alpha$-half-full, and to calculate $s^\alpha_+(K)$ one has to check whether $s^\F(K)-1$ is $\alpha$-full.

Let us denote
\[
 R(\alpha) = \im \alpha \cap \im p \subset \Kh^{0,s^\F(K)+1}(K;\F),
\]
and
\[
 S(\alpha) = \im \alpha \cap \im p \subset \Kh^{0,s^\F(K)-1}(K;\F).
\]
We want to give a criterion for checking whether any of the refinements differ from the $s$-invariant, which can be easily combined with our algorithm. For notational purposes, let us write $\mathcal{F}_{-\infty}$ for the Bar-Natan complex. There is a commutative diagram
\begin{equation}\label{eq:commdiag}
\begin{tikzpicture}[baseline={([yshift=-.5ex]current bounding box.center)}]
\node[anchor=east] at (0,1.6) {$H^0(\mathcal{F}_q;\F)$};
\node[anchor=west] at (1.6,1.6) {$H^0(\mathcal{F}_{-\infty};\F)=\BN^0(K;\F)$};
\node[anchor=east] at (0,0) {$\Kh^{0,q}(K;\F)=H^0(\mathcal{F}_q/\mathcal{F}_{q+2};\F)$};
\node[anchor=west] at (1.6,0) {$H^0(\mathcal{F}_{-\infty}/\mathcal{F}_{q+2};\F)$};
\draw[->] (0.1,1.6) -- node[above] {$i^\ast$} (1.5,1.6);
\draw[->] (0.1,0.0) -- node[above] {$j^\ast$} (1.5,0.0);
\draw[->] (-0.9,1.35) -- node[right] {$p$} (-0.9,0.3);
\draw[->] (2.5,1.35) -- (2.5,0.3);
\end{tikzpicture}
\end{equation}

\begin{proposition}\label{prop:readoffrefine}
Let $K$ be a knot and $\alpha$ a stable cohomology operation on singular cohomology with coefficients in $\F$.
\begin{enumerate}
\item We have $r^\alpha_+(K)=s^\F(K)+2$ if and only if there is $\hat{a}\in R(\alpha)$ such that $0\not=j^\ast(\hat{a})\in H^0(\mathcal{F}_{-\infty}/\mathcal{F}_{s^\F(K)+3};\F)$.
\item We have $s^\alpha_+(K)=s^\F(K)+2$ if and only if there is $\hat{b}\in S(\alpha)$ such that
$0\not=j^\ast(\hat{b})\in H^0(\mathcal{F}_{-\infty}/\mathcal{F}_{s^\F(K)+1};\F)$.
\end{enumerate}
\end{proposition}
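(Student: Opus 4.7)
The plan is to reduce both statements to identities on the long exact sequences of the pairs $(\mathcal{F}_{-\infty}, \mathcal{F}_r)$ combined with the commutative square (\ref{eq:commdiag}). Write $s = s^\F(K)$. The discussion just before the proposition forces $r^\alpha_+(K), s^\alpha_+(K) \in \{s, s+2\}$, so $r^\alpha_+(K) = s+2$ is equivalent to $s+1$ being $\alpha$-half-full, and $s^\alpha_+(K) = s+2$ is equivalent to $s-1$ being $\alpha$-full (no odd $q \geq s+1$ can be $\alpha$-full since $i^\ast \colon H^0(\mathcal{F}_{s+1};\F) \to \BN^0(K;\F)$ is not surjective, by the first formula for $s^\F$). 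The task reduces to translating these two conditions into the $R(\alpha)$- and $S(\alpha)$-statements.

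In both parts the main tool will be the long exact sequence
\begin{equation*}
\cdots \to H^0(\mathcal{F}_{q+2};\F) \xrightarrow{i^\ast} \BN^0(K;\F) \xrightarrow{\bar{\jmath}} H^0(\mathcal{F}_{-\infty}/\mathcal{F}_{q+2};\F) \to \cdots,
\end{equation*}
where $\bar{\jmath}$ is the right vertical map of (\ref{eq:commdiag}), together with the commutativity $j^\ast \circ p = \bar{\jmath} \circ i^\ast$. For part (1), taking $q = s+1$, the second formula for $s^\F$ forces $i^\ast \colon H^0(\mathcal{F}_{s+3};\F) \to \BN^0(K;\F)$ to be zero, so exactness makes $\bar{\jmath}$ injective. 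Hence for any $a \in H^0(\mathcal{F}_{s+1};\F)$, $i^\ast(a) \neq 0$ iff $j^\ast(p(a)) \neq 0$. Forward direction: a witness $(a,\tilde{a})$ yields $\hat{a} := p(a) = \alpha(\tilde{a}) \in R(\alpha)$ with $j^\ast(\hat{a}) \neq 0$. Converse: given $\hat{a} \in R(\alpha)$ with $j^\ast(\hat{a}) \neq 0$, pick any $a \in p^{-1}(\hat{a})$ and any $\tilde{a}$ with $\alpha(\tilde{a}) = \hat{a}$; then $i^\ast(a) \neq 0$ is automatic.

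For part (2) I take $q = s-1$, so $q+2 = s+1$. By both formulas for $s^\F$, $i^\ast\colon H^0(\mathcal{F}_{s+1};\F) \to \BN^0(K;\F) \cong \F^2$ is non-zero but not surjective, so its image $L$ is one-dimensional and $L = \ker \bar{\jmath}$ by exactness. The forward direction is immediate: an $\alpha$-full witness $(a,b,\tilde{a},\tilde{b})$ gives $i^\ast(a), i^\ast(b)$ spanning $\BN^0(K;\F)$, so they cannot both lie in the one-dimensional subspace $L$, and the survivor yields the desired element of $S(\alpha)$ with $j^\ast \neq 0$. The main obstacle will be the converse, because the hypothesis only hands us \emph{one} such element $\hat{b}$. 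Given $\hat{b} \in S(\alpha)$ with $j^\ast(\hat{b}) \neq 0$, the trick I would use is to pick $b \in p^{-1}(\hat{b})$ and $\tilde{b} \in \alpha^{-1}(\hat{b})$ (forcing $i^\ast(b) \notin L$), and then manufacture a second pair by choosing any $c \in H^0(\mathcal{F}_{s+1};\F)$ with $i^\ast(c) \neq 0$ and viewing it via the filtration inclusion as $a \in H^0(\mathcal{F}_{s-1};\F)$, taking $\tilde{a} = 0$. Exactness of $0 \to \mathcal{F}_{s+1} \to \mathcal{F}_{s-1} \to \mathcal{F}_{s-1}/\mathcal{F}_{s+1} \to 0$ forces $p(a) = 0 = \alpha(0)$, and $i^\ast(a) \in L$ together with $i^\ast(b) \notin L$ span $\BN^0(K;\F)$, completing the $\alpha$-full configuration.
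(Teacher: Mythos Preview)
Your proof is correct and follows essentially the same route as the paper's: both use the long exact sequence of the pair $(\mathcal{F}_{-\infty},\mathcal{F}_{q+2})$ together with the commutative square (\ref{eq:commdiag}), and in the converse of part~(2) both manufacture the missing second witness by pushing an element of $H^0(\mathcal{F}_{s+1};\F)$ down to $H^0(\mathcal{F}_{s-1};\F)$ and pairing it with $\tilde{a}=0$. Your write-up is in fact slightly more explicit than the paper's in a couple of places (e.g.\ spelling out that $\bar{\jmath}$ is injective in part~(1), and that $p(a)=0$ follows from exactness of $\mathcal{F}_{s+1}\to\mathcal{F}_{s-1}\to\mathcal{F}_{s-1}/\mathcal{F}_{s+1}$ in part~(2)), but the structure is identical.
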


\begin{proof}
Assume that $r^\alpha_+(K)=s^\F(K)+2$ which means that $q=s^\F(K)+1$ is $\alpha$-half-full. By assumption, there exists $\hat{a}=\alpha(\tilde{a})=p(a)$ with $0\not=i^\ast(a)\in \BN^0(K;\F)$. Consider the long exact sequence corresponding to $\mathcal{F}_{q+2}\subset \mathcal{F}_{-\infty}$:
\begin{equation}\label{eq:longexact}
 \cdots \longrightarrow H^0(\mathcal{F}_{q+2};\F) \stackrel{i^\ast}{\longrightarrow} \BN^0(K;\F) \stackrel{p}{\longrightarrow} H^0(\mathcal{F}_{-\infty}/\mathcal{F}_{q+2};\F) \longrightarrow \cdots
\end{equation}
Since $q+2>s^\F(K)+1$ we have $i^\ast=0$, so $p$ is injective. By the commutativity of (\ref{eq:commdiag}) we get $j^\ast(\hat{a})\not=0$.

If there is $\hat{a}\in R(\alpha)$ with $j^\ast(\hat{a})\not=0$, it follows directly from (\ref{eq:commdiag}) and the definition that $s^\F(K)+1$ is $\alpha$-half-full. By the discussion above we have $r^\alpha_+(K)=s^\F(K)+2$.

Now assume that $s^\alpha_+(K)=s^\F(K)+2$, which means that there are $\hat{a},\hat{b}\in S(\alpha)$ with $\hat{a}=p(a)$, $\hat{b}=p(b)$ and $i^\ast(a),i^\ast(b)$ generate $\BN^0(K;\F)$. Look again at the long exact sequence (\ref{eq:longexact}), but this time with $q+2=s^\F(K)+1$. This time $i^\ast$ is non-zero, but not surjective. So some linear combination of $\hat{a},\hat{b}$ is not in the kernel of $j^\ast$.

If there is $\hat{b}\in S(\alpha)$ with $j^\ast(\hat{b})\not=0$, let $\check{a}\in H^0(\mathcal{F}_{s^\F(K)+1};\F)$ be such that $0\not=i^\ast(\check{a})\in \BN^0(K;\F)$. Choose $b\in H^0(\mathcal{F}_{s^\F(K)-1};\F)$ with $p(b)=\hat{b}$, and let $a=i^\ast(\check{a})\in H^0(\mathcal{F}_{s^\F(K)-1};\F)$. Then
\[
 i^\ast(a)\in \ker p\colon \BN^0(K;\F) \to H^0(\mathcal{F}_{-\infty}/\mathcal{F}_{s^\F(K)+1};\F),
\]
while $i^\ast(b)$ is not. We then get the configuration for $\alpha$-fullness of $q=s^\F(K)-1$ using $a,b$ and $0,\hat{b}$.
\end{proof}

\subsection{The operation $\Sq^1$}
The first cohomology operation of interest, considered in \cite{MR3189434}, is $\Sq^1$, the first Steenrod square. It is shown in \cite[Thm.3]{MR3189434} that there is a knot $K$ with
\[
s^{\Sq^1}_+(K) \not= s^{\F_2}(K).
\]
The knot in question is $14n19265$, and the argument is based on the fact that
\[
 s^{\F_2}(K) \not=s^\Q(K)
\]
by a computation of C.\ Seed and the Khovanov cohomology of this knot.

The first Steenrod square can be expressed in terms of the Bockstein sequence corresponding to the short exact sequence
\[
 0\longrightarrow \Z/2\Z \longrightarrow \Z/4\Z \longrightarrow \Z/2\Z \longrightarrow 0
\]
and thus can be defined directly on the Khovanov complex.

Step 1 to Step 5 work equally well on the commutative ring $\Z/4\Z$, but we need to be a little bit careful with reading off the Khovanov cohomology with $\Z/4\Z$-coefficients, as there may be several non-zero entries in the coboundary between generators of the same quantum degree. The ring $\Z/4\Z$ is not a domain, so we do not have a Smith Normal Form, but the ring is uncomplicated enough so that we can bring the cochain complex into a form that resembles a Smith Normal Form.

\begin{definition}
A cochain complex $C^\ast$ is called \em $\Z/4\Z$-elementary\em, if there is $n\in \Z$ and $i\geq 0$ such that
\[
 C^k = \left\{ \begin{array}{cc} 0 & k\notin \{n,\ldots,n+i\} \\ \Z/4\Z & k\in \{n,\ldots,n+i\} \end{array}\right.
\]
and the coboundary $d^k\colon C^k \to C^{k+1}$ is multiplication by $2$ for $n\leq k < n+i$. We call $i$ the \em length \em of the $\Z/4\Z$-elementary complex.

Let $C^\ast$ be a free cochain complex over $\Z/4\Z$ and there is a filtration
\[
 \cdots \subset \mathcal{F}_nC^\ast \subset \mathcal{F}_{n-1} C^\ast\subset \cdots \subset C^\ast
\]
by subcomplexes. Then $C^\ast$ is said to be in \em filtered $\Z/4\Z$-normal form\em, if each quotient complex $\mathcal{F}_{n-1}C^\ast/\mathcal{F}_nC^\ast$ is a direct sum of $\Z/4\Z$-elementary cochain complexes.
\end{definition}

\begin{lemma}
Let $C^\ast$ be a free $\Z/4\Z$-cochain complex such that there exists a finite basis $\mathcal{B}$, and subsets
\[
 \emptyset =\mathcal{B}_{n+1} \subset \mathcal{B}_n \subset \cdots \subset \mathcal{B}_{n-m} = \mathcal{B}
\]
such that each $\mathcal{B}_{n-j}$ generates a free subcomplex $\mathcal{F}_{n-j}C^\ast$ of $C^\ast$. Then $C^\ast$ is chain homotopy equivalent to a free filtered cochain complex $D^\ast$ which is in filtered $\Z/4\Z$-normal form, such that the chain homotopy equivalence $i\colon C^\ast \to D^\ast$ restricts to chain homotopy equivalences $i|\colon \mathcal{F}_{n-j}C^\ast \to \mathcal{F}_{n-j}D^\ast$.
\end{lemma}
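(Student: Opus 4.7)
The plan is to induct on the length $m$ of the filtration, reducing the problem to a decomposition lemma for a single free $\Z/4\Z$-cochain complex, and then lifting that decomposition across one filtration layer at a time.

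For the single-complex case, first apply Lemma~\ref{lem:gausselim} to cancel every coboundary matrix entry that is a unit of $\Z/4\Z$ (i.e.\ $\pm 1$). After finitely many such eliminations, every entry of the coboundary lies in $\{0,2\}$, so $d(C)\subset 2C$. Now induct on $\sum_k \text{rank}(C^k)$. If $d=0$, the complex is already a direct sum of length-$0$ elementary pieces. Otherwise let $k_0$ be the smallest index with $d^{k_0}\neq 0$ and pick $x_0$ in a basis of $C^{k_0}$ with $d(x_0)\neq 0$. Iteratively choose $x_{j+1}\in C^{k_0+j+1}$ with $2x_{j+1}=d(x_j)$; this is possible because $d(x_j)$ lies in the $2$-torsion of the free module $C^{k_0+j+1}$, which coincides with $2C^{k_0+j+1}$ (using $d\circ d=0$ together with $d(2x_{j-1})=0$). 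Stop at the first index $i$ with $d(x_i)=0$. Since $2x_{j+1}\neq 0$ for each $j<i$, each $x_{j+1}$ lies outside $2C^{k_0+j+1}$ and extends to a $\Z/4\Z$-basis of that module. Finally, modify every other basis element $y$ of $C^{k_0+j-1}$ whose coboundary contains $2x_j$ with nonzero coefficient by replacing $y$ with $y - x_{j-1}$; this eliminates the $2x_j$ term from $d(y)$. The resulting basis splits $C^\ast=E^\ast\oplus C'^\ast$ as cochain complexes, with $E^\ast = \langle x_0,\ldots,x_i\rangle$ a length-$i$ elementary complex, and one recurses on $C'^\ast$.

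For the filtered version, peel layers from the bottom. The smallest piece $\mathcal{F}_nC^\ast$ is itself a free cochain complex, handled by the single-complex decomposition above. Inductively, suppose $\mathcal{F}_{n-j+1}C^\ast$ has already been placed in filtered $\Z/4\Z$-normal form by a chain homotopy equivalence coming from a sequence of Gaussian eliminations and basis changes. The quotient complex $\mathcal{F}_{n-j}C^\ast/\mathcal{F}_{n-j+1}C^\ast$ is free, with basis given by $\mathcal{B}_{n-j}\setminus\mathcal{B}_{n-j+1}$. Apply the single-complex decomposition to this quotient. Each Gaussian elimination and change of basis performed on the quotient then lifts to an operation on $\mathcal{F}_{n-j}C^\ast$: the lift is possible because the subcomplex condition $d(\mathcal{F}_{n-j+1})\subset\mathcal{F}_{n-j+1}$ forces the off-diagonal maps $\delta$ and $\gamma$ of Lemma~\ref{lem:gausselim} that would mix new generators into $\mathcal{F}_{n-j+1}$ to vanish on $\mathcal{F}_{n-j+1}$, so the lifted Gaussian elimination modifies only the complement of $\mathcal{F}_{n-j+1}$ and preserves the existing normal form.

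The main obstacle is exactly the bookkeeping in this lifting step: one must check that every change-of-basis vector $(-\varphi^{-1}\delta(y), y)$ from Lemma~\ref{lem:gausselim}, when $y$ is a generator outside $\mathcal{F}_{n-j+1}$, has the property that its correction term lies entirely in the complement of $\mathcal{F}_{n-j+1}$, and hence does not interfere with the coboundaries already placed in normal form inside $\mathcal{F}_{n-j+1}$. This is not deep, but because $\mathcal{F}_{n-j+1}$ is a subcomplex and Gaussian elimination only affects basis elements in the two homological degrees being cancelled, the verification reduces to the observation that no correction from outside $\mathcal{F}_{n-j+1}$ can alter the coboundary restricted to $\mathcal{F}_{n-j+1}$. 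The composition of all lifted operations then gives the required chain homotopy equivalence $i$, whose restriction to each $\mathcal{F}_{n-j}C^\ast$ is the chain homotopy equivalence produced at that stage of the induction.
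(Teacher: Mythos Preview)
Your proof is correct and follows essentially the same strategy as the paper: induction on $m$, first bringing the smallest piece $\mathcal{F}_nC^\ast$ into normal form via Gaussian elimination (to reduce all coboundary entries to $\{0,2\}$) followed by handle slides, then extending one filtration layer at a time using only operations on the new generators, which leave $\mathcal{F}_{n-j+1}C^\ast$ untouched because it is a subcomplex. Your explicit chain construction $x_0,\ldots,x_i$ for splitting off an elementary summand in fact supplies the detail the paper omits (where it merely cites handle slides and notes one must ``give priority to longer length''); the only point to tidy is that your handle slides $y\mapsto y-x_{j-1}$ should be performed in decreasing order of $j$, since each such slide can alter the $x_{j-1}$-coefficient of coboundaries one degree below.
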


\begin{proof}
The proof is done by induction over $m$. We first show that $\mathcal{F}_n$ can be brought into $\Z/4\Z$-normal form using Gaussian elimination and handle slides involving the basis elements of $\mathcal{B}_n$ only.

Using Gaussian elimination we can ensure that the coboundary matrix of $\mathcal{F}_n$ has only entries $0$ and $2$. We then use handle slides, compare, for example, \cite{MR3848400}, to turn $\mathcal{F}_n$ into a direct sum of $\Z/4\Z$-elementary cochain complexes. This argument is similar to the one used in the proof of \cite[Thm.6.2]{MR3848400}, where a chain complex is brought into Smith Normal Form.
One has to be a bit careful with elementary complexes of length bigger than $1$, in that one has to give priority to longer length. We will omit the details, especially since the complexes arising for us come from $\Z$-cochain complexes via the tensor product, and can never have length bigger than $1$.

Once we have the statement for $\mathcal{F}_{n-j+1}$, we use Gaussian elimination on basis elements in $\mathcal{B}_{n-j}-\mathcal{B}_{n-j+1}$ and then handle slides on the remaining ones. As $\mathcal{F}_{n-j+1}$ is a subcomplex, its coboundary is not affected by these operations. We can therefore finish the argument by induction.
\end{proof}

Notice that the $\Z/4\Z$-cochain complex $C^{\ast,\ast}_n$ we get after the first five steps can be used to read off the Khovanov cohomology with $\Z/2\Z$ coefficients by simply counting the generators according to their quantum and homological degrees. From this we can obtain $s^{\F_2}(K)$ using Steps 6-10.

We now apply handle slides, which can be done algorithmically, to get the complex into filtered $\Z/4\Z$-normal form, where we can read off $\Sq^1$ on Khovanov cohomology. It simply corresponds to the $\Z/4\Z$-elementary summands of length $1$ in the quotient complexes.

To get the intersection with the image of $p$ we need to check which linear combinations remain cocycles when considered in $\mathcal{F}_q$ for $q=s^{\F_2}(K)\pm 1$.

Finally, we need to see if any of these cocycles survive after applying the coboundary $d^{-1}\colon \F_2\otimes C^{-1,\ast}_n \to \F_2\otimes C^{0,\ast}_n$. If this is the case, we add $2$ to $s^{\F_2}(K)$ in the appropriate refinement, according to Proposition \ref{prop:readoffrefine}.

In order to work out $\Sq^1$ we should get the correct calculation for the Khovanov cohomology in homological degrees $-1$ and $1$. Using Step 4' is therefore not appropriate. However, we can easily modify this step to only disregard objects of homological degree bigger than $2$ or smaller than $-2-n+i$.

\subsection{The operation $\Sq^2$}
The second Steenrod square requires the stable homotopy type of \cite{MR3230817}, or at least a $1$-flow category as in \cite{2017arXiv171001857L}. At the moment there is no analogue of \cite{MR2320156} for the stable homotopy type, so we need a global approach to an appropriate $1$-flow category. 
Using Proposition \ref{prop:glueddiags} one can get $1$-flow categories $\cC_{\mathrm{Kh}}^{q}(K)$ for a knot $K$, whose cochain complexes are connected to form a Bar-Natan complex.

We only need to keep track of objects in homological gradings between $-3$ and $3$, and use the cancellation technique of \cite{2017arXiv171001857L} for objects of the same quantum degree. This way we can get much smaller $1$-flow categories whose cochain complexes can be used to read off the Khovanov cohomology. 
To determine $R(\Sq^2)$ and $S(\Sq^2)$ we can look at the smaller $1$-flow category whose objects have homological degree between $-2$ and $0$.
This $1$-flow category can be brought algorithmically into ``Chang form'' as in \cite{2017arXiv171001798L}. This Chang form makes it possible to read off $R(\Sq^2)$ or $S(\Sq^2)$ in the same way as $R(\Sq^1)$ or $S(\Sq^1)$, which then makes it possible to calculate $s^{\Sq^2}(K)$.

\section{Calculations and remarks on efficiency}
\label{sec:examples}

Consider the graph in Figure \ref{fig:k14n19265hom}. We can interpret the vertices as generators for a cochain complex $C^{\ast,\ast}$ with homological and quantum degrees as indicated, and the coboundary determined by the edges. The gray edges without labels represent either $+1$ or $-1$ in the coboundary.
The exact value is not important as we only need to know whether we can cancel generators between such an edge.
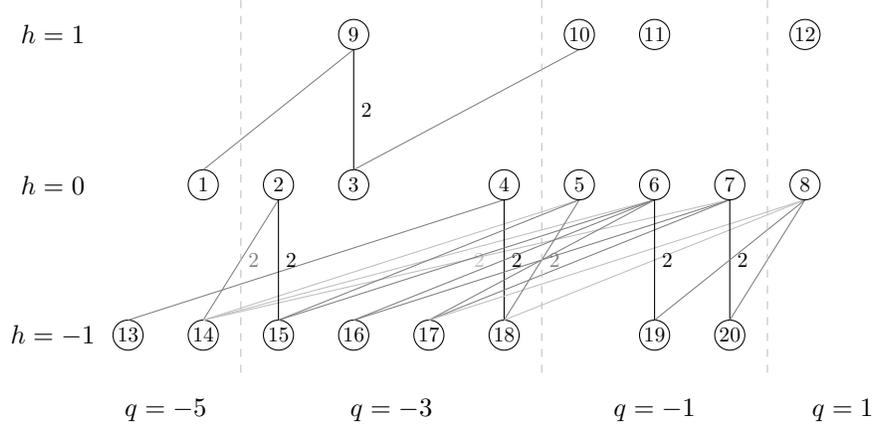
\begin{figure}[ht]
\begin{tikzpicture}
\draw (0,0) circle (0.2);
\node at (0,0) [scale = 0.8] {$13$};
\draw (1,0) circle (0.2);
\node at (1,0) [scale = 0.8] {$14$};
\draw (1,2) circle (0.2);
\node at (1,2) [scale = 0.8] {$1$};
\draw[color = gray] (1,2.2) -- (3,3.8);
\draw[color = gray] (0,0.2) -- (5,1.8);
\draw[color = gray] (1,0.2) -- node [right,scale =0.8] {$2$} (2,1.8);
\draw[color = lightgray] (1,0.2) -- (7,1.8);
\draw[color = lightgray] (1,0.2) -- (6,1.8);
\draw[color = lightgray] (1,0.2) -- node [right,scale =0.8] {$2$} (8,1.8);
\draw[dashed, color = lightgray] (1.5,-0.5) -- (1.5,4.5); 
\draw (2,0) circle (0.2);
\node at (2,0) [scale = 0.8] {$15$};
\draw (3,0) circle (0.2);
\node at (3,0) [scale = 0.8] {$16$};
\draw (4,0) circle (0.2);
\node at (4,0) [scale = 0.8] {$17$};
\draw (5,0) circle (0.2);
\node at (5,0) [scale = 0.8] {$18$};
\draw (2,2) circle (0.2);
\node at (2,2) [scale = 0.8] {$2$};
\draw (3,2) circle (0.2);
\node at (3,2) [scale = 0.8] {$3$};
\draw (5,2) circle (0.2);
\node at (5,2) [scale = 0.8] {$4$};
\draw (3,4) circle (0.2);
\node at (3,4) [scale = 0.8] {$9$};
\draw (2,0.2) -- node [right,scale =0.8] {$2$} (2,1.8);
\draw (3,2.2) -- node [right,scale =0.8] {$2$} (3,3.8);
\draw (5,0.2) -- node [right,scale =0.8] {$2$} (5,1.8);
\draw[color = gray] (3,2.2) -- (6,3.8);
\draw[color = gray] (2,0.2) -- (6,1.8);
\draw[color = gray] (2,0.2) -- (7,1.8);
\draw[color = gray] (3,0.2) -- (7,1.8);
\draw[color = gray] (3,0.2) -- (8,1.8);
\draw[color = gray] (4,0.2) -- (7,1.8);
\draw[color = gray] (4,0.2) -- (8,1.8);
\draw[color = lightgray] (4,0.2) -- (9,1.8);
\draw[color = gray] (5,0.2) -- node [right,scale =0.8] {$2$} (6,1.8);
\draw[color = lightgray] (5,0.2) -- (9,1.8);
\draw[dashed, color = lightgray] (5.5,-0.5) -- (5.5,4.5);
\draw (7,0) circle (0.2);
\node at (7,0) [scale = 0.8] {$19$};
\draw (8,0) circle (0.2);
\node at (8,0) [scale = 0.8] {$20$};
\draw (6,2) circle (0.2);
\node at (6,2) [scale = 0.8] {$5$};
\draw (7,2) circle (0.2);
\node at (7,2) [scale = 0.8] {$6$};
\draw (8,2) circle (0.2);
\node at (8,2) [scale = 0.8] {$7$};
\draw (6,4) circle (0.2);
\node at (6,4) [scale = 0.8] {$10$};
\draw (7,4) circle (0.2);
\node at (7,4) [scale = 0.8] {$11$};
\draw (8,0.2) -- node [right,scale =0.8] {$2$} (8,1.8);
\draw (7,0.2) -- node [right,scale =0.8] {$2$} (7,1.8);
\draw[color = gray] (7,0.2) -- (9,1.8);
\draw[color = gray] (8,0.2) -- (9,1.8);
\draw[dashed, color = lightgray] (8.5,-0.5) -- (8.5,4.5);
\draw (9,2) circle (0.2);
\node at (9,2) [scale = 0.8] {$8$};
\draw (9,4) circle (0.2);
\node at (9,4) [scale = 0.8] {$12$};
\node at (-1,0) {$h=-1$};
\node at (-1,2) {$h=0$};
\node at (-1,4) {$h=1$};
\node at (0.5,-1) {$q=-5$};
\node at (3.5,-1) {$q=-3$};
\node at (7,-1) {$q=-1$};
\node at (9.5,-1) {$q=1$};
\end{tikzpicture}
\caption{A cochain complex in filtered $\Z/4\Z$-normal form. The gray edges between $(2,14)$, $(5,18)$ and $(7,14)$ are labelled $2$.}
\label{fig:k14n19265hom}
\end{figure}

This is in fact obtained by using our computer programme on the knot $14n19265$ with $\Z/4\Z$ coefficients after Gaussian elimination and handle slides to turn it into filtered $\Z/4\Z$-normal form, but we can interpret this cochain complex over $\Z$ as well.

We can continue with our algorithm and perform steps 6 to 8. We first cancel generator \circlenumber{3} with \circlenumber{10}, as this has maximal quantum degree of those which can be cancelled with a generator of homological degree $1$. After that we cancel generator \circlenumber{1} with \circlenumber{9}.

We now have to cancel from below, starting with minimal quantum degree for which this is possible. So we can cancel \circlenumber{4} with \circlenumber{13}, while cancelling \circlenumber{2} with \circlenumber{14} is possible rationally, but not over $\F_2$. In fact, rationally we would have already cancelled all generators except for \circlenumber{1}, \circlenumber{5} and \circlenumber{8} in Step 5, so we can easily read off $s^\Q=0$.

Over $\F_2$, we can cancel \circlenumber{5} with \circlenumber{14}, and after that only one of \circlenumber{6} and \circlenumber{7}. Finally, we can cancel \circlenumber{8} and read off $s^{\F_2}=-2$.

To determine $r^{\Sq^1}_+$ observe that the image of $\Sq^1$ in quantum degree $-1$ is generated by \circlenumber{6} and \circlenumber{7}, and both are cocycles in $\F_2\otimes \mathcal{F}_{-1}$. But when viewed in the quotient complex $\F_2 \otimes \mathcal{F}_{-\infty} / \F_2\otimes \mathcal{F}_1$, we get \circlenumber{6} is cohomologous to \circlenumber{7}, and also to \circlenumber{5}, but this element survives. Hence $r^{\Sq^1}_+=0$ by Proposition \ref{prop:readoffrefine}.

Similarly, we see that $S(\Sq^1)$ is generated by \circlenumber{2} and \circlenumber{4}, but \circlenumber{4} is a coboundary in $\F_2 \otimes \mathcal{F}_{-\infty} / \F_2\otimes \mathcal{F}_{-1}$. Still, \circlenumber{2} survives, so $s^{\Sq^1}_+=0$.

If we turn Figure \ref{fig:k14n19265hom} upside down, we get $S(\Sq^1)=0$, and $R(\Sq^1)$ is generated by \circlenumber{3}. But \circlenumber{3} gets killed by \circlenumber{10} once we pass to the quotient complex. Overall, we get
\[
 s^{\Sq^1}(14n19265) = (0,0,-2,-2).
\]

\subsection{Calculations for $s^{\Sq^1}$}

The programme \verb+SKnotJob+ \cite{SKnotJob} was used to calculate the invariant $s^{\Sq^1}(K)$ for all prime knots with up to 16 crossings. We list all knots with at most 15 crossings for which this invariant has one entry different from $s^{\F_2}(K)$ in Table \ref{tab:tableLSs}. We also found 162 non-alternating prime knots with 16 crossings with this behaviour.

\begin{table}[ht]
\begin{tabular}{|l|c|c|c||l|c|c|c|}
\hline
Knot & $s^{\Sq^1}$ & $s^{\F_2}$ & $s^{\F_3}$ & Knot & $s^{\Sq^1}$ & $s^{\F_2}$ & $s^{\F_3}$ \\
\hline
\hline
$14n19265$ & $(0,0,-2,-2)$ & $ -2$ & $0$ &
$14n22180$ & $(2,2,0,0)$ & $2$ & $0$ \\
\hline
$15n040226$ & $(2,2,0,0)$ & $0$ & $2$ &
$15n041127$ & $(2,2,0,0)$ & $0$ & $2$ \\
\hline
$15n041140$ & $(0,0,-2,-2)$ & $0$ & $-2$ &
$15n048439$ & $(2,2,0,0)$ & $0$ & $2$ \\
\hline
$15n052310$ & $(2,2,0,0)$ & $0$ & $2$ &
$15n052477$ & $(2,2,0,0)$ & $0$ & $2$ \\
\hline
$15n052495$ & $(2,2,0,0)$ & $0$ & $2$ &
$15n053135$ & $(2,2,0,0)$ & $0$ & $2$ \\
\hline
$15n057674$ & $(0,0,-2,-2)$ & $0$ & $-2$ &
$15n059044$ & $(2,2,0,0)$ & $0$ & $2$ \\
\hline
$15n059184$ & $(0,0,-2,-2)$ & $0$ & $-2$ &
$15n083419$ & $(2,2,0,0)$ & $0$ & $2$ \\
\hline
$15n084460$ & $(4,4,2,2)$ & $2$ & $4$ &
$15n094892$ & $(2,2,0,0)$ & $0$ & $2$ \\
\hline
$15n115486$ & $(2,2,0,0)$ & $0$ & $2$ &
$15n116118$ & $(2,2,0,0)$ & $0$ & $2$ \\
\hline
$15n116363$ & $(2,2,0,0)$ & $0$ & $2$ &
$15n124915$ & $(2,2,0,0)$ & $0$ & $2$ \\
\hline
$15n127312$ & $(0,0,-2,-2)$ & $0$ & $-2$ &
$15n127580$ & $(0,0,-2,-2)$ & $0$ & $-2$ \\
\hline
$15n129563$ & $(0,0,-2,-2)$ & $0$ & $-2$ &
$15n130691$ & $(0,0,-2,-2)$ & $0$ & $-2$ \\
\hline
$15n132535$ & $(0,0,-2,-2)$ & $0$ & $-2$ &
$15n132615$ & $(0,0,-2,-2)$ & $0$ & $-2$ \\
\hline
$15n132623$ & $(2,2,0,0)$ & $0$ & $2$ &
$15n132672$ & $(2,2,0,0)$ & $0$ & $2$ \\
\hline
$15n132684$ & $(0,0,-2,-2)$ & $0$ & $-2$ &
$15n135086$ & $(2,2,0,0)$ & $0$ & $2$ \\
\hline
$15n135095$ & $(0,0,-2,-2)$ & $0$ & $-2$ &
$15n139104$ & $(2,2,0,0)$ & $0$ & $2$ \\
\hline
$15n140905$ & $(2,2,0,0)$ & $0$ & $2$ &
$15n141051$ & $(2,2,0,0)$ & $0$ & $2$ \\
\hline
$15n141061$ & $(0,0,-2,-2)$ & $0$ & $-2$ &
$15n141556$ & $(2,2,0,0)$ & $0$ & $2$ \\
\hline
$15n141560$ & $(0,0,-2,-2)$ & $0$ & $-2$ &
$15n149575$ & $(2,2,0,0)$ & $2$ & $0$ \\
\hline
\end{tabular}
\\[0.2cm]
\caption{Prime knots with non-standard $s^{\Sq^1}$ and at most $15$ crossings.}
\label{tab:tableLSs}
\end{table}

It is noticable that for all of these knots we also have $s^{\F_2}(K)\not= s^{\F_3}(K)$. In particular we do not get a better lower bound for the genus from $s^{\Sq^1}(K)$ than from the standard $s$-invariants. In Figure \ref{fig:hypothconf} we have a hypothetical configuration which would result in $s^{\F_2}=s^\Q=0$, but $s^{\Sq^1}=(0,2,0,-2)$. It would be interesting to know whether a knot could give rise to such a configuration. 

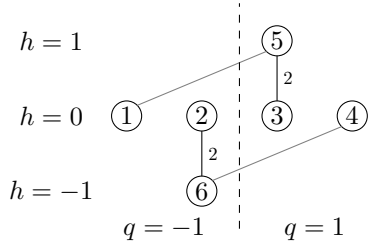
\begin{figure}[ht]
\begin{tikzpicture}
\node at (0,0) {$6$};
\draw (0,0) circle (0.2);
\node at (-1,1) {$1$};
\draw (-1,1) circle (0.2);
\node at (0,1) {$2$};
\draw (0,1) circle (0.2);
\node at (1,1) {$3$};
\draw (1,1) circle (0.2);
\node at (2,1) {$4$};
\draw (2,1) circle (0.2);
\node at (1,2) {$5$};
\draw (1,2) circle (0.2);
\draw (0,0.2) -- node [right,scale=0.7] {$2$} (0,0.8);
\draw (1,1.2) -- node [right,scale=0.7] {$2$} (1,1.8);
\draw [color=gray] (0.141,0.141) -- (1.859,0.859);
\draw [color=gray] (-0.859,1.141) -- (0.859,1.859);
\draw [dashed] (0.5,-0.5) -- (0.5,2.5);
\node at (-0.5,-0.5) {$q=-1$};
\node at (1.5,-0.5) {$q=1$};
\node at (-2,0) {$h=-1$};
\node at (-2,1) {$h=0$};
\node at (-2,2) {$h=1$};
\end{tikzpicture}
\caption{A cochain complex with unusual $s^{\Sq^1}$.}
\label{fig:hypothconf}
\end{figure}

It is known that $s^{\Sq^2}(K)$ gives better bounds for some knots \cite{MR3189434}, but we have not implemented an algorithm for this invariant yet.

\subsection{Efficiency}
Given an immersion of $S^1$ into the plane, which has at most double self-intersections, consider the knot diagrams obtained from this immersion by changing intersection points into over- or under-crossings. It is clear that our algorithm works better the further away the writhe of a diagram is from $0$. For example, if a diagram has $100$ crossings and half of them are positive, we will only start throwing away generators from the 52nd crossing onwards.
If there are only five negative (or positive) crossings, we start throwing away generators much sooner, and we will overall deal with much fewer generators.

In the extreme case of a positive knot $K$ it was already observed in \cite{MR2729272} that
\[
 s^\F(K) = −k + n + 1,
\]
where $n$ is the number of crossings and $k$ is the number of circles in the resolution sitting in homological degree $0$. To count the number of circles, one can scan through the crossings and check whether the $0$-resolution leads to a new circle. To some extend, this is what our algorithm does. Every time a new circle appears in the object in homological degree $0$, we can cancel it and never have more than one object in homological degree $0$.
One can check that we will not get more than $i$ objects of homological degree $1$ at the $i$-th crossing, and while this is not as efficient as simply counting circles, it is still done very fast.

In Table \ref{tab:calculations} we list times it took \verb+SKnotJob+ to perform various calculations.

\begin{table}[ht]
\begin{tabular}{|l|c|c|c|c|c|c|}
\hline
Knots & $s^{\F_2}$ & $s^{\F_3}$ & $s^{\Sq^1}$ & $\Kh(\cdot;\F_2)$ & $\Kh(\cdot;\Z)$ & Steps 1-5 \\
\hline
\hline
$3a1$--$13n5110$ & 0:57 & 1:09 & 1:22 & 1:17 & 1:31 & 1:35\\
\hline
$14n1$--$14n10000$ & 0:59 & 1:05 & 1:27 & 1:24 & 1:40 & 1:44 \\
\hline
$15n1$--$15n10000$ & 1:28 & 1:35 & 2:11 & 1:56 & 2:22 & 2:38 \\
\hline
$16n1$--$16n10000$ & 2:14 & 2:42 & 3:47 & 3:01 & 4:18 & 4:12 \\
\hline
\end{tabular}
\\[0.2cm]
\caption{Calculation times in minutes.}
\label{tab:calculations}
\end{table}

In the last column the algorithm runs through the first five steps with coefficients in $\F_2$ without disregarding any objects, which amounts to calculating the Khovanov cohomology with $\F_2$ coefficients. It is not surprising that this is slower than calculating Khovanov cohomology directly.

Alternating knots take noticably longer to calculate. By \cite[Thm.1.1]{MR2509750} the $s$-invariant cannot depend on the field in this case, and it furthermore agrees with the signature \cite[Thm.3]{MR2729272}. 

It would be possible to refine these comparisons by focussing on knots whose writhe is close to $0$ or away from it. Indeed, we did some experiments with 16 crossing knots and observed that for knots with writhe between $-2$ and $2$ the time improvements are less pronounced, but still there. 
Overall we observed that the direct calculation of the $s$-invariant is faster than going via Khovanov cohomology, but the scanning algorithm itself still occupies noticable time and memory.

The computations were done on a standard laptop. We do not claim the implementation is the most efficient, but the code used for the scanning algorithm on the Bar-Natan complex and the Khovanov complex is very similar. More efficient ways to compute Khovanov cohomology via the scanning algorithm should also result in more efficient ways to implement our algorithm.

\bibliography{KnotHomology}
\bibliographystyle{amsalpha}
\end{document}